\documentclass[10pt]{article}
\usepackage{titlesec}
\titleformat{\subsection}{\normalsize\itshape}{\thesubsection}{1em}{}
\usepackage{amssymb, amsmath, amsthm, amsfonts, enumerate,bm,nicefrac}
\usepackage{mathtools}
\usepackage{amsmath,setspace,scalefnt}
\usepackage[dvipsnames,svgnames,table]{xcolor}
\usepackage{graphicx,tikz,caption,subcaption}
\usetikzlibrary{patterns}
\usetikzlibrary{shapes}
\usepackage{fullpage}
\usepackage[hidelinks]{hyperref}
\usepackage{authblk}
\usepackage{enumitem,verbatim}
\usepackage{multicol}
\usepackage{float}
\usepackage{cleveref}
\usepackage{bm}

\usepackage[margin=2.5cm]{geometry}

\definecolor{gray}{rgb}{0.25, 0.25, 0.25}
\usepackage{tikz}
\usepackage{booktabs}
\newtheorem{theorem}{Theorem}[section]
\newtheorem{lemma}[theorem]{Lemma}

\newtheorem{corollary}[theorem]{Corollary}

\newtheorem*{observation*}{Observation}

\newtheorem{problem}[theorem]{Problem}
\newtheorem*{problem*}{Problem}

\newtheorem*{question*}{Question}

\theoremstyle{definition}

\newtheorem{fact}[theorem]{Fact}
\newenvironment{definition*}
  {
   \innerdefinition}
  {\endinnerdefinition}

\theoremstyle{remark}

\DeclareMathOperator{\supp}{supp}

\title{\Large\bf The spectral inducibility of graphs}

\author[1]{Liying Kang\thanks{Email: \texttt{lykang@shu.edu.cn}}}
\author[2]{Xizhi Liu\thanks{Email: \texttt{liuxizhi@ustc.edu.cn}}}
\author[1]{Yongchun Lu\thanks{ Corresponding author. Email: \texttt{luyongchun@shu.edu.cn}}}

\affil[1]{\small Department of Mathematics, Shanghai University, Shanghai, China}
\affil[2]{\small School of Mathematical Sciences, University of Science and Technology of China, Hefei, Anhui, China}
\date{\today}

\begin{document}
\maketitle
\begin{abstract}

We introduce a spectral version of the classical inducibility problem. Given an
$\ell$-vertex graph $F$ and an $n$-vertex graph $G$, let $H_F(G)$ be the
$\ell$-uniform hypergraph whose edges are the $\ell$-sets inducing a copy of $F$ in
$G$. We study the maximum possible $\alpha$-spectral radius of $H_F(G)$ over
all $n$-vertex graphs $G$. For fixed $G$, this spectral parameter tends to
$\ell!$ times the number of induced copies of $F$ in $G$ as $\alpha\to\infty$,
and therefore refines the usual induced-copy count.

Our main result is a spectral analogue of the Brown--Sidorenko reduction: for
every complete multipartite graph $F$, every $n$, and every $\alpha\ge1$, a
spectral extremal graph can be chosen to be complete multipartite. We also show that
the leading asymptotic constant is the ordinary inducibility $i(F)$, and
obtain exact multipartite reductions for stars $K_{1,t}$ and balanced
complete $r$-partite graphs $K_{a,\ldots,a}$ with $r\le 2^a-1$.
\end{abstract}

\section{Introduction}

A fundamental problem in extremal graph theory, introduced by Pippenger and
Golumbic~\cite{pippenger1975inducibility}, is to determine the maximum number
of induced copies of a fixed graph $F$ among all $n$-vertex graphs. Given two
graphs $F$ and $G$, let $I(F,G)$ denote the number of subsets $X\subseteq V(G)$
such that $X$ induces a copy of $F$ in $G$. Write $v(G)$ for the number of
vertices of $G$. For every positive integer $n$, define
\begin{align*}
I(F,n)&\coloneqq\max\{I(F,G): v(G)=n\}.
\end{align*}
The \emph{inducibility} of $F$ is defined by
\begin{align*}
i(F)&\coloneqq\lim_{n\to \infty}{I(F,n)}/{\tbinom{n}{v(F)}}.
\end{align*}
A simple averaging argument shows that the ratio is non-increasing in $n$, and
hence the limit exists. Although the definition is elementary, determining
$I(F,n)$ or even $i(F)$ is notoriously difficult. Exact or asymptotic results
are known only for special families of graphs. Related small-graph inducibility
problems have been investigated
in~\cite{balogh2016maximum,exoo1986dense,hirst2014inducibility,pikhurko2019strong},
and some of these works use Razborov's flag algebra method~\cite{razborov2007flag}.
Notably, the inducibility of $P_4$, the path on four vertices, is still open.
The best known lower and upper bounds were obtained in
\cite{evenzohar2015note}.

Complete multipartite graphs form one of the most natural and important
classes in the study of inducibility. For positive integers $k$ and $n$, let
$T_k(n)$ denote the \emph{Tur\'an graph},
namely the complete $k$-partite graph on $n$ vertices whose part sizes differ
by at most one. For $rt$-vertex Tur\'an graphs $T_r(rt)$, Bollob\'{a}s, Egawa,
Harris and Jin~\cite{bollobas1995maximal} proved that, when $t>1+\log r$,
the maximum number of induced copies is attained by the Tur\'{a}n graph
$T_r(n)$. This direction was further developed by Hatami, Hirst and
Norine~\cite{hatami2014inducibility}, who showed more generally that the
inducibility extremizers of sufficiently large balanced blow-ups are
asymptotically blow-ups of the original graph. In a fundamental paper, Brown
and Sidorenko~\cite{brown1994inducibility} proved that, when $F$ is
complete bipartite, the extremizer can be chosen to be complete
bipartite. Moreover, they showed that for every complete multipartite graph
$F$ and every $n\geq 1$, there exists an $n$-vertex complete multipartite graph
$G$ satisfying $I(F,G)=I(F,n)$. However, determining the optimal number of
parts of $G$ and the ratios of its part sizes remains a difficult problem.
Thus the inducibility of complete multipartite graphs is still far from being
completely understood. For general graphs, results of Yuster~\cite{yuster2019exact} and
Fox, Huang and Lee~\cite{fox2017solution} showed that, for almost all graphs
$F$, the inducibility of $F$ is given by
$i(F)=v(F)!/(v(F)^{v(F)}-v(F))$.
Recent progress in this direction was obtained by
Yuster~\cite{yuster2026inducibility}, who studied the inducibility of
Tur\'{a}n graphs. Liu, Ma and Zhu~\cite{liu2026inducibility} further studied
almost balanced complete multipartite graphs and showed that, for sufficiently
large $n$, the extremal graph is the Tur\'{a}n graph. In a related direction,
Liu, Mubayi and Reiher~\cite{liu2023feasible} studied feasible regions of
induced graphs. This framework refines the
ordinary inducibility problem by considering, for each possible edge density,
the maximum induced density of a fixed graph.

It is well known that, in extremal combinatorics, spectral counterparts of
counting parameters are often closely related to the corresponding counting
problems (see, for example,
\cite{friedman1995small,friedman1995second,keevash2014spectral,liu2025spectral,zheng2025spectral}).
This viewpoint has been particularly fruitful in spectral extremal graph theory:
Nikiforov's work gives spectral versions of classical Tur\'an-type and
Erd\H{o}s--Stone--Bollob\'as phenomena~\cite{nikiforov2002some,nikiforov2009spectral}, while the $p$-spectral radius studied by Nikiforov and coauthors interpolates between
extremal edge counts and spectral radii and yields asymptotic equivalences for
hereditary properties~\cite{kang2014extremal,nikiforov2014some}. Motivated by
this perspective, we develop a spectral analogue of the inducibility problem.
Let $F$ be an $\ell$-vertex graph. For an $n$-vertex graph $G$, define the
$\ell$-uniform hypergraph $H_F(G)$ on vertex set $V(G)$ by
\begin{align*}
H_F(G)&\coloneqq\{S\subseteq V(G): G[S]\cong F\}.
\end{align*}
Thus the classical inducibility problem measures $H_F(G)$ simply by its
number of edges. For $\mathbf{x}=(x_1,\ldots,x_n)\in\mathbb{R}^n$, set
\begin{align*}
P_{F,G}(\mathbf{x})&\coloneqq \ell!\sum_{e\in H_F(G)}\prod_{i\in e}x_i.
\end{align*}
The hypergraph eigenvalues used here are based on the definitions of Friedman
and Wigderson~\cite{friedman1995second}. For $\alpha\ge1$, define
\begin{align*}
\lambda_{\alpha}^{\mathrm{ind}}(F,G)&\coloneqq\max\left\{P_{F,G}(\mathbf{x}):\mathbf{x}\in\mathbb{S}_\alpha^{n-1}\right\}, 
\end{align*}
where $\mathbb{S}_\alpha^{n-1} \coloneqq \left\{\mathbf{x}\in\mathbb{R}^n: |x_1|^\alpha+\cdots+|x_n|^\alpha=1\right\}$. 
Since replacing a vector by its coordinatewise absolute value cannot decrease
$P_{F,G}$, the maximum always has a nonnegative optimizer. Let
\begin{align*}
\Delta_\alpha^{n-1}
&\coloneqq\left\{\mathbf{x}\in\mathbb{R}_{\ge0}^n:
|x_1|^\alpha+\cdots+|x_n|^\alpha=1\right\}.
\end{align*}
We define
\begin{align*}
\operatorname{OPT}_{\alpha,F}(G)
&\coloneqq\left\{\mathbf{x}\in\Delta_\alpha^{n-1}:
P_{F,G}(\mathbf{x})=\lambda_\alpha^{\mathrm{ind}}(F,G)\right\}.
\end{align*}

The \emph{$\alpha$-spectral inducibility number} of $F$ is
\begin{align*}
\operatorname{spec}_\alpha^{\operatorname{ind}}(F,n)&\coloneqq\max\{\lambda_\alpha^{\mathrm{ind}}(F,G):v(G)=n\}.
\end{align*}

With this notation, it is natural to pose the following problem.
\begin{problem}
Given a graph $F$, determine the value of
$\operatorname{spec}_{\alpha}^{\mathrm{ind}}(F,n)$.
\end{problem}

This problem contains the classical inducibility problem as a limiting case
and, when $\alpha=1$, is closely related to the classical optimization
problem arising from blow-ups; see Section~\ref{sec:prelim} for details.

Among the early results on inducibility, a substantial part concerns
complete multipartite graphs. Following this line of work, we study the
corresponding extremal problem for the spectral inducibility of complete
multipartite graphs. Motivated by these classical works, we obtain the
following spectral analogues.
\medskip
\begin{theorem}\label{thm:cmp-ext}
Let $F$ be a complete multipartite graph with $\ell$ vertices. The following hold.
\begin{enumerate}[label=(\roman*)]
    \item \label{thmi} For every positive integer $n$ and every $\alpha\ge1$, there exists an $n$-vertex complete multipartite graph $G$ such that $\lambda_\alpha^{\mathrm{ind}}(F,G)=\operatorname{spec}_\alpha^{\operatorname{ind}}(F,n)$.
    \item \label{thmii} For every positive integer $n$ and every $\alpha\ge1$, if an $n$-vertex complete multipartite graph $G$ satisfies $\lambda_\alpha^{\mathrm{ind}}(F,G)=\operatorname{spec}_\alpha^{\operatorname{ind}}(F,n)$, then there exists a vector $\mathbf{x}\in \operatorname{OPT}_{\alpha,F}(G)$ whose entries are identical on each part of $G$.
    \item \label{thmiii} For every fixed $\alpha\ge1$, as $n\to\infty$, we have $\operatorname{spec}_\alpha^{\mathrm{ind}}(F,n)=\left(i(F)+o(1)\right)n^{\ell(1-1/\alpha)}$.
\end{enumerate}
\end{theorem}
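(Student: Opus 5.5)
Three ingredients drive the proof: Zykov symmetrization adapted to the weighted polynomial $P_{F,G}$, the fact that for a complete multipartite $F$ the induced copies only care about which vertices are \emph{twins} (non-adjacent with identical neighbourhoods), and a comparison between the spectral parameter and the Lagrangian-type quantities coming from blow-ups.

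For part \ref{thmi}, fix $\alpha\ge1$ and $n$. Among all pairs $(G,\mathbf{x})$ with $v(G)=n$, $\mathbf{x}\in\operatorname{OPT}_{\alpha,F}(G)$ and $\lambda_\alpha^{\mathrm{ind}}(F,G)=\operatorname{spec}_\alpha^{\operatorname{ind}}(F,n)$, choose one maximizing the number of pairs of twins. Assume $G$ is not complete multipartite. Then non-adjacency is not an equivalence relation, so there are vertices $u,v,w$ with $uv,vw\notin E(G)$ and $uw\in E(G)$.

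Write
\[
P_{F,G}(\mathbf{x})=Ax_u+Bx_v+Cx_w+(\text{terms containing none of }u,v,w)+(\text{terms with at least two of }u,v,w).
\]
Because $F$ is complete multipartite, every induced copy of $F$ containing two non-adjacent vertices contains them in the same part, so they are twins inside the copy. The key step is the Brown--Sidorenko cloning operation: replace $u$ by a twin copy of $v$, or $v$ by a copy of $u$, or analogous moves on the triple (e.g.\ clone $v$ onto both $u$ and $w$). We keep $\mathbf{x}$ unchanged. Then $\|\mathbf{x}\|_\alpha$ is unchanged and $P$ changes linearly in the "responsibilities" $\partial P/\partial x_u$, $\partial P/\partial x_v$, $\partial P/\partial x_w$, up to cross terms.

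The cross terms are the main obstacle. In the classical argument they are controlled because a copy of $F$ cannot contain $u,w$ together with $v$ when $uv,vw$ are non-edges but $uw$ is an edge. Indeed, this would require $u,v$ and $v,w$ in the same part, hence $u,w$ in the same part, contradicting $uw\in E$. So copies meet $\{u,v,w\}$ in at most the pair $\{u,w\}$, or in a non-edge pair.

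I would then compare the three configurations "all cloned to $v$", "$u$ cloned to $w$'s class and $v$ kept", and so on, by an averaging identity. This shows one of them has $P\ge P_{F,G}(\mathbf{x})$, so that the new graph is also extremal with $\mathbf{x}$ optimal. Either this contradicts the maximality of the twin count, or, in the weight-tied case, a second-order refinement does. Here I would follow Brown--Sidorenko's case analysis verbatim, with the counts $I(F,\cdot)$ replaced by $\mathbf{x}$-weighted sums; multilinearity of $P$ in each variable is what makes their argument transfer.

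For part \ref{thmii}, let $G$ be complete multipartite and extremal, and let $\mathbf{x}$ be optimal. Take two vertices $u,v$ in the same part. Then $P$ has the form
\[
a(x_u+x_v)+b\,x_ux_v+c,
\]
where $b\ge0$ is the weight of copies containing both. Replacing $(x_u,x_v)$ by equal values $t,t$ with $2t^\alpha=x_u^\alpha+x_v^\alpha$ preserves the norm. By the power-mean inequality (using $\alpha\ge1$) we get $2t\ge x_u+x_v$, and also $t^2\ge x_ux_v$. Hence $P$ does not decrease, and the new vector is still optimal. Iterating this over pairs, or taking a limit or compactness argument (e.g.\ maximizing a strictly concave tie-breaker over the compact set $\operatorname{OPT}_{\alpha,F}(G)$), yields an optimal vector that is constant on each part.

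For part \ref{thmiii}, for the lower bound take an $n$-vertex $G$ attaining $I(F,n)$ and the uniform vector $x_i=n^{-1/\alpha}$. This gives
\[
\lambda\ge \ell!\,I(F,n)\,n^{-\ell/\alpha}=(i(F)+o(1))\,n^{\ell(1-1/\alpha)}.
\]
For the upper bound, by \ref{thmi} and \ref{thmii} we may restrict to complete multipartite $G$ with an optimizer constant on parts. By Hölder, $\|\mathbf{x}\|_1\le n^{1-1/\alpha}$, so after normalizing $\mathbf{y}=\mathbf{x}/\|\mathbf{x}\|_1$ it suffices to show
\[
\ell!\sum_{e\in H_F(G)}\prod_{i\in e} y_i\le i(F)+o(1)
\]
for probability vectors $\mathbf{y}$. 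This holds because the left side is at most the induced $F$-density of the weighted blow-up of $G$, up to the degenerate terms. Since $F$ is complete multipartite, repeated vertices stay inside a part, so blowing $G$ up according to $\mathbf{y}$ with $N\to\infty$ vertices gives a graph with at least $(\text{left side}-o(1))\binom{N}{\ell}$ induced copies. Hence the left side is at most $i(F)$.
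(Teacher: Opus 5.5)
\textbf{Overall.} Your parts (ii) and (iii) are sound, but part (i), the heart of the theorem, has a genuine gap.

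\textbf{Parts (ii) and (iii).} In (ii), the tie-breaker must strictly improve under the $\alpha$-power-mean averaging. Minimizing $\sum_v x_v^{2\alpha}$, as the paper does, works. ``Strictly concave'' alone is not enough: $-\sum_v x_v^2$ gets worse under this averaging when $\alpha>2$. Your H\"older-plus-blow-up bound in (iii) holds for every $n$-vertex graph $G$, so (iii) does not depend on (i); the paper instead routes it through (i) and part-constant vectors.

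\textbf{The gap in (i).} You give no argument, only a triple $u,v,w$, an unspecified ``averaging identity'', a ``second-order refinement'' in the tied case, and a deferral to Brown--Sidorenko ``verbatim''. Knowing that \emph{some} modification has $P\ge P_{F,G}(\mathbf{x})$ does not help a twin-count maximality argument unless that same modification also increases the twin count, and nothing in your sketch guarantees this. The double clone of $v$ onto the adjacent pair $u,w$ is the Tur\'an-type Zykov move. Here it turns the edge $uw$ into a non-edge, so the copies of $F$ through $uw$ are not controlled and the net change has no evident sign.

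\textbf{The missing idea.} Extremality forces a tie, after which you may choose the direction of the move. For nonadjacent $u,v$ write $P_{F,G}(\mathbf{x})=A+x_uB_u+x_vB_v+x_ux_vC$. Passing to $G_{u\to v}$ replaces $B_u$ by $B_v$ and $C$ by some $C'\ge C$. This is because an induced copy of $F$ containing the non-edge $uv$ has $u,v$ in one part, so it survives. Hence $P_{F,G_{u\to v}}(\mathbf{x})-P_{F,G}(\mathbf{x})\ge x_u(B_v-B_u)$; keeping $\mathbf{x}$ fixed is fine here, while the paper additionally averages $x_u,x_v$. If $x_u,x_v>0$, extremality of $G$ forces $B_u=B_v$. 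Then both $G_{u\to v}$ and $G_{v\to u}$ are extremal with $\mathbf{x}$ still optimal. Let $a,b$ be the sizes of the twin classes of $u,v$ in $G[\supp(\mathbf{x})]$. The twin count there changes by at least $b-a+1$ for $G_{u\to v}$ and at least $a-b+1$ for $G_{v\to u}$, and one of these is positive. There is no untied case (it contradicts extremality) and no second-order term.

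\textbf{Zero-weight vertices.} This is also why twins must be counted only inside $G[\supp(\mathbf{x})]$. If $x_u=0<x_v$, nothing forces $B_u=B_v$, and only the move $u\to v$ is guaranteed safe. Its effect on the twin count is bounded below only by $b-a+1$, which can be negative. So your maximization over all of $G$, aimed at making $G$ itself complete multipartite, does not close as stated. The paper instead makes $G[\supp(\mathbf{x})]$ complete multipartite and then puts all zero-weight vertices into one part, which leaves $P_{F,\cdot}(\mathbf{x})$ unchanged.
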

\medskip

We also prove exact reductions to complete multipartite graphs for two special
families.
\medskip
\begin{theorem}\label{thm:star}
For every integer $t\ge 2$, every $\alpha\ge1$ and every positive integer $n$, we have
\begin{align*}
\operatorname{spec}_\alpha^{\operatorname{ind}}(K_{1,t},n)&=
\max_{\substack{n_1,n_2\ge0\\ n_1+n_2=n}}
\left\{\lambda_\alpha^{\mathrm{ind}}(K_{1,t},K_{n_1,n_2})\right\},
\end{align*}
where parts of size zero are omitted.
\end{theorem}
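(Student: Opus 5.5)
By Theorem~\ref{thm:cmp-ext}\ref{thmi}, applied with $F=K_{1,t}$ (a complete multipartite graph), there is an extremal graph $G$ that is complete multipartite. So it suffices to show that any complete multipartite $G$ with at least three nonempty parts can be replaced by a complete bipartite graph $K_{n_1,n_2}$ with $n_1+n_2=n$ and $\lambda_\alpha^{\mathrm{ind}}(K_{1,t},K_{n_1,n_2})\ge\lambda_\alpha^{\mathrm{ind}}(K_{1,t},G)$.

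Let $G$ have parts $V_1,\dots,V_k$ with $k\ge3$, and fix $\mathbf{x}\in\operatorname{OPT}_{\alpha,K_{1,t}}(G)$. An induced $K_{1,t}$ in a complete multipartite graph uses one center $v$ in some part $V_i$ and $t\ge2$ leaves. The leaves must be pairwise nonadjacent, so they lie in a single part $V_j$ with $j\neq i$. Writing $s_i=\sum_{v\in V_i}x_v$ and $e_t(V_j)$ for the $t$-th elementary symmetric polynomial of $(x_v)_{v\in V_j}$, we get
\begin{align*}
P_{K_{1,t},G}(\mathbf{x})=(t+1)!\sum_{j}e_t(V_j)\Bigl(\sum_{i\ne j}s_i\Bigr)=(t+1)!\sum_j e_t(V_j)\,(S-s_j),
\end{align*}
where $S=\sum_i s_i$. (If $t+1=2$ the two roles would coincide, but $t\ge2$ rules this out.)

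The key step is to merge parts. Take two parts, say $V_1$ and $V_2$, and form $G'$ by merging them into one independent set $V_1\cup V_2$, keeping the same vector $\mathbf{x}$. Merging can only help the $e_t$ terms, since $e_t(V_1\cup V_2)\ge e_t(V_1)+e_t(V_2)$ by nonnegativity. However, it hurts the multipliers $S-s_j$ for $j=1,2$, and it removes the stars with center in $V_1$ and leaves in $V_2$, and vice versa.

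So a naive merge need not increase $P$. Instead I will choose the merge cleverly: keep the part $V_m$ maximizing a suitable weight, and merge all other parts into a single part $W$. Then I compare the two expressions. In $G'$ the value is
\begin{align*}
e_t(V_m)(S-s_m)+e_t(W)s_m.
\end{align*}
In $G$ the value is
\begin{align*}
e_t(V_m)(S-s_m)+\sum_{j\ne m}e_t(V_j)(S-s_j).
\end{align*}
It therefore suffices to find $m$ with
\begin{align*}
e_t(W)s_m\ge\sum_{j\ne m}e_t(V_j)(S-s_j).
\end{align*}
Since $e_t(W)\ge\sum_{j\ne m}e_t(V_j)$, it is enough that $s_m\ge S-s_j$ for every $j\ne m$, but this generally fails.

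This is where the main obstacle lies, and I would first try an averaging argument over the choice of the part structure. Alternatively, I would use the finer inequality
\begin{align*}
e_t(W)\ge\sum_j e_t(V_j)+\sum_{j\ne j'}e_{t-1}(V_j)s_{j'},
\end{align*}
which exploits the cross terms in $e_t(W)$ (using $t\ge2$). If a one-shot merge is awkward, I would instead proceed by induction on $k$. At each step, merge the two parts with the smallest sums $s_1\le s_2$ and show that the loss $e_t(V_1)s_2+e_t(V_2)s_1$ is compensated by the gain in cross terms
\begin{align*}
\bigl(e_t(V_1\cup V_2)-e_t(V_1)-e_t(V_2)\bigr)(S-s_1-s_2),
\end{align*}
using $S-s_1-s_2\ge s_2\ge s_1$ when $k\ge3$. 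Termwise, $e_t(V_1\cup V_2)-e_t(V_1)-e_t(V_2)\ge e_{t-1}(V_1)s_2+e_{t-1}(V_2)s_1$ roughly dominates, since $e_{t-1}(V_1)s_1\ge e_t(V_1)$ by a Maclaurin-type comparison ($e_{t-1}\cdot e_1\ge t\,e_t$). That comparison is the inequality I would verify carefully.

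Once the merge is shown not to decrease $P$, we have $\lambda_\alpha^{\mathrm{ind}}(K_{1,t},G')\ge P_{K_{1,t},G'}(\mathbf{x})\ge\lambda_\alpha^{\mathrm{ind}}(K_{1,t},G)$ with fewer parts. Iterating down to two parts, or one part, which gives value $0$ and corresponds to omitting empty parts, yields the claimed formula.
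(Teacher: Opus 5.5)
Your main line, the iterated pairwise merge of the two parts with smallest mass $s_1\le s_2$, is correct. It follows the same skeleton as the paper: get a complete multipartite extremal graph from Theorem~\ref{thm:cmp-ext}\ref{thmi}, then repeatedly merge the two lightest parts until two remain. (The paper orders parts by $n_iw_i$, which is exactly your $s_i$ for a part-constant vector.)

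The difference is in how the merge step is verified.
\begin{itemize}
\item The paper first passes to an optimal vector that is constant on parts (Lemma~\ref{lem:const}). It then replaces the values on $V_1\cup V_2$ by their $\alpha$-power mean and controls the change with H\"older's inequality and the monotonicity of $\gamma_t(m)=\binom{m}{t}/m^t$. This forces it to restrict to parts of positive weight and to re-invoke Theorem~\ref{thm:cmp-ext}\ref{thmii} after every merge.
\item You keep $\mathbf{x}$ unchanged, so feasibility is automatic and the step does not depend on $\alpha$. You use the exact formula $P=(t+1)!\sum_j e_t(V_j)(S-s_j)$, the convolution $e_t(V_1\cup V_2)=\sum_k e_k(V_1)e_{t-k}(V_2)$, and $e_1e_{t-1}\ge t\,e_t$. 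These show that merging never decreases $P_{K_{1,t},\cdot}(\mathbf{x})$ for \emph{every} nonnegative $\mathbf{x}$, including parts of zero mass, where the loss is $0$.
\end{itemize}
So you can run the whole reduction with one fixed optimal vector, which is slightly cleaner and more general. In exchange, the paper's version collapses everything to a two-variable inequality in $a_1=n_1w_1$, $a_2=n_2w_2$, and it shares that template with the proof of Theorem~\ref{thm:multipartite-balanced}.

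One correction is needed. Your termwise bound $e_t(V_1\cup V_2)-e_t(V_1)-e_t(V_2)\ge e_{t-1}(V_1)s_2+e_{t-1}(V_2)s_1$ is false for $t=2$. There the $k=1$ and $k=t-1$ terms of the convolution coincide, so the left side is $s_1s_2$ while the right side is $2s_1s_2$. Your ``finer inequality'' for $e_t(W)$ has the same double counting when $t=2$.

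The argument survives because of the slack factor $1/t$ in $e_1e_{t-1}\ge t\,e_t$. For all $t\ge2$ your termwise bound does hold with a factor $\tfrac12$ on the right. Writing $R=S-s_1-s_2\ge s_2\ge s_1$, one then gets
\begin{align*}
\text{gain}-\text{loss}\ge e_{t-1}(V_1)s_2\Bigl(\tfrac R2-\tfrac{s_1}{t}\Bigr)+e_{t-1}(V_2)s_1\Bigl(\tfrac R2-\tfrac{s_2}{t}\Bigr)\ge0 .
\end{align*}

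The exploratory one-shot merge and averaging ideas are not needed and can be dropped.
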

\medskip

\begin{theorem}\label{thm:multipartite-balanced}
Let $a\ge 2$ and $r\ge 2$ be integers with $r\le 2^a-1$. Let $F$ be the complete
$r$-partite graph whose parts all have size $a$. For every $\alpha\ge1$ and
every integer $n\ge 1$, we have
\begin{align*}
\operatorname{spec}_\alpha^{\operatorname{ind}}(F,n)&=
\max_{\substack{n_1,\ldots,n_r\ge0\\ n_1+\cdots+n_r=n}}
\left\{\lambda_\alpha^{\mathrm{ind}}(F,K_{n_1,n_2,\ldots,n_r})\right\},
\end{align*}
where parts of size zero are omitted.
\end{theorem}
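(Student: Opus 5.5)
By Theorem~\ref{thm:cmp-ext}\ref{thmi}, it suffices to show the following: if $G$ is an $n$-vertex complete multipartite graph with parts $V_1,\dots,V_s$ maximizing $\lambda_\alpha^{\mathrm{ind}}(F,G)$, then we may take $s\le r$. Parts of size zero then account for fewer parts. So I would fix such a $G$ and, by Theorem~\ref{thm:cmp-ext}\ref{thmii}, an optimal vector $\mathbf{x}\in\operatorname{OPT}_{\alpha,F}(G)$ that is constant on each part, say equal to $y_j$ on $V_j$, with $|V_j|=n_j$. Write $N_j\coloneqq n_jy_j$. A copy of $F$ in $G$ uses exactly $r$ of the parts, taking $a$ vertices from each. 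Hence
\begin{align*}
P_{F,G}(\mathbf{x})=\ell!\sum_{\substack{J\subseteq[s]\\|J|=r}}\ \prod_{j\in J}\binom{n_j}{a}y_j^{a},
\end{align*}
which is an elementary symmetric polynomial of degree $r$ in the quantities $w_j\coloneqq\binom{n_j}{a}y_j^a$.

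The main step is a merging argument. Suppose $s>r$. I would take two parts, say $V_1$ and $V_2$, and merge them into a single part of size $n_1+n_2$, carrying a common weight $z$ with $(n_1+n_2)z^\alpha=n_1y_1^\alpha+n_2y_2^\alpha$. The $\alpha$-norm is unchanged, so the new vector is still feasible. Write $e_k$ for the elementary symmetric polynomial of degree $k$ in the other $w_j$. Then $P$ changes from $\ell!\,(e_r+(w_1+w_2)e_{r-1}+w_1w_2e_{r-2})$ to $\ell!\,(e_r+w'e_{r-1})$, where $w'=\binom{n_1+n_2}{a}z^a$. So it suffices to choose the pair so that
\[
w'e_{r-1}\ge (w_1+w_2)e_{r-1}+w_1w_2e_{r-2}.
\]
The gain $w'-w_1-w_2$ comes from the cross terms $\binom{n_1}{i}\binom{n_2}{a-i}$ with $0<i<a$. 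The loss is $w_1w_2\,e_{r-2}/e_{r-1}$.

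I expect this inequality to be the main obstacle, and it is where the hypothesis $r\le 2^a-1$ enters. Since $\sum_{i}\binom{n_1}{i}\binom{n_2}{a-i}=\binom{n_1+n_2}{a}$, the binomial coefficients together with a power-mean or H\"older step should give
\[
w'\ge\Bigl(w_1^{1/a}+w_2^{1/a}\Bigr)^{a}\cdot c
\]
for a suitable constant $c$, where the $2^a$ shows up as $(1+1)^a$ in the balanced case. I would choose the two parts with the smallest $w_j$. Then Maclaurin/Newton inequalities bound the ratio: $e_{r-2}/e_{r-1}\le (r-1)/\bigl((s-r+1)\,w_{\min}\bigr)$ roughly, so the loss is at most about $(r-1)\cdot\max(w_1,w_2)$ up to that factor. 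It would then remain to compare $(2^a-2)$ times the minimum weight against $r-1$, which is exactly where $r\le 2^a-1$ is needed.

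A cleaner alternative I would try first is a probabilistic or symmetrization route. Treat $P$ as a function of the continuous masses and note that $e_r(w)$ is Schur-concave. Then apply Zykov-style symmetrization: either replace part $V_2$'s vertices by clones of $V_1$, or merge the two. The merged option is favourable whenever $w'\ge w_1+w_2+w_1w_2e_{r-2}/e_{r-1}$, which is linear in the ratio and can be checked at the extreme configuration where all $w_j$ are equal. That equal configuration reduces the condition to $2^a-2\ge r-1$, matching the hypothesis $r\le 2^a-1$.

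Iterating the merge strictly decreases $s$ without decreasing $P$, and therefore without decreasing $\lambda_\alpha^{\mathrm{ind}}$. This terminates at a complete $r'$-partite graph with $r'\le r$, which is of the required form $K_{n_1,\dots,n_r}$ once empty parts are allowed. That proves the theorem.
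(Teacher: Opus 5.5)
Your route is the paper's: write $P_{F,G}$ as $\ell!$ times the degree-$r$ elementary symmetric polynomial in $w_j=\binom{n_j}{a}y_j^a$, merge the two parts with the smallest $w_j$, and reduce everything to $2^a-2\ge r-1$. Taking the two smallest $w_j$ over all parts, zeros included, even lets you skip the paper's Lemma~\ref{lem:multipartite-size} and its final clean-up step. But the proposal stops short exactly at the crux.

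You need $w'\ge(w_1^{1/a}+w_2^{1/a})^a$ with constant exactly $1$. For any fixed $c<1$, your lower bound on the gain $w'-w_1-w_2$ tends to $(c-1)w_2<0$ as $w_1/w_2\to0$, so nothing follows. The Vandermonde splitting does not give the inequality term by term. The merged weight $z$ can lie far below $y_2$ (take $n_1\gg n_2$ and $y_1\ll y_2$), and then $\binom{n_2}{a}z^a\ll w_2$.

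The missing idea is that $B(m)\coloneqq\binom{m}{a}^{1/a}/m$ is nondecreasing on the positive integers. It vanishes for $m<a$, and $B(m)^a=\frac1{a!}\prod_{j<a}(1-j/m)$ for $m\ge a$. Combined with H\"older's inequality $(n_1+n_2)z\ge n_1y_1+n_2y_2$, this gives
\[
(w')^{1/a}=B(n_1+n_2)\,(n_1+n_2)z\ge B(n_1+n_2)\,(n_1y_1+n_2y_2)\ge B(n_1)\,n_1y_1+B(n_2)\,n_2y_2=w_1^{1/a}+w_2^{1/a}.
\]

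The second problem is your loss bound $(r-1)\max(w_1,w_2)$, which is too weak. With $w_1\le w_2$, the gain you can guarantee is $\sum_{k=1}^{a-1}\binom{a}{k}w_1^{k/a}w_2^{1-k/a}$. This is $o(w_2)$ as $w_1/w_2\to0$, so it cannot be compared against $(r-1)w_2$.

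You must use that every remaining weight is at least $w_2$, not merely at least $\min(w_1,w_2)$. If $w_1>0$, then all $s-2\ge r-1$ remaining weights are positive and at least $w_2$. Double counting then gives $(r-1)e_{r-1}\ge (s-r)\,w_2\,e_{r-2}$; note the factor is $s-r$, not $s-r+1$. Hence the loss $w_1w_2e_{r-2}/e_{r-1}$ is at most $(r-1)w_1$. Since each cross term satisfies $w_1^{k/a}w_2^{1-k/a}\ge w_1$, the gain is at least $(2^a-2)w_1$, and $r\le 2^a-1$ finishes. This is exactly the paper's computation with $\tau=(A_1/A_2)^{1/a}$.

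If $w_1=0$, the loss vanishes, and $w'\ge w_2$ from the display above suffices.

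Your ``cleaner alternative'' of checking only the equal-weight configuration is unjustified. Nothing shows that configuration is extremal for the merging condition, and it is not needed once the two points above are in place.
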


When $r=2$, Theorem~\ref{thm:multipartite-balanced} applies to the balanced
complete bipartite graph $K_{a,a}$ for every $a\geq 2$, since
$2\leq 2^a-1$ for all $a\geq 2$. Hence, in this case, there always exists a
complete bipartite graph attaining
$\operatorname{spec}_\alpha^{\mathrm{ind}}(K_{a,a},n)$ for every $a\ge2$.

\medskip
The rest of the paper is organized as follows. In Section~\ref{sec:prelim}, we
collect basic observations and notation. In Section~\ref{sec:cmp-ext-proof}, we
prove Theorem~\ref{thm:cmp-ext}. Finally, Sections~\ref{sec:star-proof}
and~\ref{sec:multipartite-proof} are devoted to the proofs of
Theorems~\ref{thm:star} and~\ref{thm:multipartite-balanced}, respectively.

\section{Preliminaries}\label{sec:prelim}

Let $G$ be a graph.
Throughout the paper, when some part sizes in $K_{n_1,\ldots,n_q}$ are zero,
those parts are omitted. Unless explicitly stated otherwise, in formulas
involving division by $n_i$, all part sizes are assumed to be positive.
For a vector $\mathbf{x}=(x_i)_{i\in V(G)}$, write
\begin{align*}
\supp(\mathbf{x})&\coloneqq\{v\in V(G):x_v>0\}.
\end{align*}
Two nonadjacent vertices are called \emph{twins} if they have the same
neighborhood. We use $\mathcal{T}(G,\mathbf{x})$ to denote the number of
twin-pairs in $G[\supp(\mathbf{x})]$. An \emph{equivalence class} is a maximal
subset of vertices such that any two vertices in the class are twins.

We next record two basic observations about the spectral parameter. This
spectral problem refines the classical inducibility problem. Indeed, for a
fixed $\ell$-vertex graph $F$ and an $n$-vertex graph $G$, substituting the
vector $(n^{-1/\alpha},\dots,n^{-1/\alpha})\in\Delta_\alpha^{n-1}$ into
$P_{F,G}$ gives
$\lambda_\alpha^{\mathrm{ind}}(F,G)\ge \ell!|H_F(G)|n^{-\ell/\alpha}$, while $P_{F,G}(\mathbf{x})\le \ell!|H_F(G)|$ for every feasible vector $\mathbf{x}$. Hence
\begin{align*}
\lim_{\alpha\to\infty}\lambda_\alpha^{\mathrm{ind}}(F,G)&=\ell!I(F,G).
\end{align*}
When $\alpha=1$, determining $\lambda_1^{\mathrm{ind}}(F,G)$ is exactly the
classical optimization problem arising from blow-ups. If
$\mathbf{x}=(x_v)_{v\in V(G)}$ is feasible and $N$ is sufficiently large, then
replacing each vertex $v\in V(G)$ with an independent class of size
$(x_v+o(1))N$ gives a blow-up hypergraph with
$\left(P_{F,G}(\mathbf{x})/\ell!+o(1)\right)N^\ell$ edges.

\begin{fact}\label{lem:uniform-lower-bound}
Let $F$ be an $\ell$-vertex graph and let $G_n$ be an $n$-vertex graph.
Then
\begin{align*}
    \lambda_\alpha^{\mathrm{ind}}(F,G_n)
    \ge P_{F,G_n}(n^{-1/\alpha}, \ldots, n^{-1/\alpha})
    = \frac{\ell!}{n^{\ell/\alpha}} \cdot I(F,G_n).
\end{align*}
In particular, as $n\to\infty$,
\begin{align*}
\operatorname{spec}_\alpha^{\mathrm{ind}}(F,n)
&\ge i(F)n^{\ell(1-1/\alpha)}
+O\!\left(n^{\ell(1-1/\alpha)-1}\right).
\end{align*}
\end{fact}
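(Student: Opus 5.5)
The first inequality follows by evaluating $P_{F,G_n}$ at the uniform vector $\mathbf{u}\coloneqq(n^{-1/\alpha},\ldots,n^{-1/\alpha})$. We have $\sum_v |u_v|^\alpha = n\cdot n^{-1}=1$, so $\mathbf{u}\in\Delta_\alpha^{n-1}$, and $\lambda_\alpha^{\mathrm{ind}}(F,G_n)\ge P_{F,G_n}(\mathbf{u})$ by the definition of the maximum. Each edge $e\in H_F(G_n)$ contributes $\prod_{i\in e}u_i=n^{-\ell/\alpha}$, since $|e|=\ell$. The number of edges of $H_F(G_n)$ is $I(F,G_n)$ by definition, so
\begin{align*}
P_{F,G_n}(\mathbf{u})=\ell!\,n^{-\ell/\alpha}\,I(F,G_n).
\end{align*}

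For the asymptotic statement, take $G_n$ to be an $n$-vertex graph with $I(F,G_n)=I(F,n)$. Then
\begin{align*}
\operatorname{spec}_\alpha^{\mathrm{ind}}(F,n)\ge \lambda_\alpha^{\mathrm{ind}}(F,G_n)\ge \ell!\,n^{-\ell/\alpha}I(F,n).
\end{align*}
As recalled in the introduction, the ratio $I(F,n)/\binom{n}{\ell}$ is non-increasing in $n$ and converges to $i(F)$. Hence $I(F,n)\ge i(F)\binom{n}{\ell}$ for every $n$, and this holds exactly, not just up to an error term. Expanding the binomial coefficient gives
\begin{align*}
\ell!\binom{n}{\ell}=n(n-1)\cdots(n-\ell+1)=n^\ell+O(n^{\ell-1}),
\end{align*}
where the implied constant depends only on $\ell$. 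Multiplying by $n^{-\ell/\alpha}$ yields
\begin{align*}
\operatorname{spec}_\alpha^{\mathrm{ind}}(F,n)\ge i(F)\,n^{\ell(1-1/\alpha)}+O\!\left(n^{\ell(1-1/\alpha)-1}\right).
\end{align*}

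The only step that needs care is the direction of the monotonicity. The averaging argument shows that $I(F,n)/\binom{n}{\ell}$ is non-increasing: an extremal graph on $n$ vertices has, on average over its $(n-1)$-vertex subsets, density at most $I(F,n-1)/\binom{n-1}{\ell}$. So the limit $i(F)$ is a lower bound for every term. This gives the inequality $I(F,n)\ge i(F)\binom{n}{\ell}$ directly, with no $o(1)$ loss. The remaining error comes only from the lower-order terms of the falling factorial. These terms are negative, but they are $O(n^{\ell-1})$, which is what the stated error allows.
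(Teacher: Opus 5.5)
Your proof is correct and follows the paper's route: evaluate $P_{F,G_n}$ at the uniform vector $(n^{-1/\alpha},\ldots,n^{-1/\alpha})$, then apply this to an extremal graph for $I(F,n)$ and use the averaging monotonicity of $I(F,n)/\binom{n}{\ell}$ recalled in the introduction. Your use of the exact bound $I(F,n)\ge i(F)\binom{n}{\ell}$ is what produces the stated $O(n^{\ell(1-1/\alpha)-1})$ error term; the weaker $I(F,n)=(i(F)+o(1))\binom{n}{\ell}$, used in the proof of Theorem~\ref{thm:cmp-ext}(iii), would not give it.
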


We also use the following notation in the proofs. For two nonadjacent vertices
$u,v\in V(G)$, let $G_{u\to v}$ denote the graph obtained from $G$ by the Zykov
symmetrization of $u$ toward $v$, that is, by replacing the neighborhood of
$u$ with the neighborhood of $v$. For a vector
$\mathbf{t}=(t_1,\ldots,t_n)$ and $\alpha\ge1$, let
$\mathbf{t}_{u,v}=(t_1',\ldots,t_n')$ be defined by
\begin{align*}
t'_u=t'_v&\coloneqq\left(\frac{t_u^\alpha+t_v^\alpha}{2}\right)^{1/\alpha},
\qquad\text{and}\qquad
t'_w\coloneqq t_w \quad \text{for } w\ne u,v.
\end{align*}
Then $\|\mathbf{t}\|_\alpha=\|\mathbf{t}_{u,v}\|_\alpha$.

For a positive integer $m$, write $[m]\coloneqq\{1,\ldots,m\}$. 
For integers $a\le b$, write $[a,b]\coloneqq\{a,a+1,\ldots,b\}$. 
For integers $p,m\ge1$, define
\begin{align*}
[m]_p&\coloneqq\{(i_1,\ldots,i_p)\in[m]^p: i_1,\ldots,i_p \text{ are pairwise distinct}\}.
\end{align*}
For positive integers $d_1,\ldots,d_s$, recall that
$\operatorname{sym}(d_1,\ldots,d_s)$ denotes the product of the factorials of
the multiplicities of the distinct values among $d_1,\ldots,d_s$.
For a complete $r$-partite graph $F=K_{a_1,\ldots,a_r}$ with
$\ell=a_1+\cdots+a_r$, define
\begin{align*}
\kappa_F&\coloneqq
\frac{\ell!}{a_1!\cdots a_r!\operatorname{sym}(a_1,\ldots,a_r)}.
\end{align*}

\begin{lemma}\label{lem:const}
Let $F$ be a complete multipartite graph, and let $K$ be a complete
multipartite graph. There exists a nonnegative optimal vector $\mathbf{x}$
for $\lambda_\alpha^{\mathrm{ind}}(F,K)$ such that the entries of
$\mathbf{x}$ are identical on each part of $K$.
\end{lemma}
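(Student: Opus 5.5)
The plan is to use a two-vertex smoothing argument inside each part of $K$, together with a compactness/extremality argument over $\operatorname{OPT}_{\alpha,F}(K)$. Write the parts of $K$ as $V_1,\ldots,V_q$. The key structural observation is that any two vertices $u,v$ in the same part $V_i$ are twins in $K$. Moreover, $P_{F,K}$ is a multiaffine polynomial in $x_1,\ldots,x_n$ with nonnegative coefficients, since it is $\ell!$ times a sum of monomials over edges of $H_F(K)$. Hence, fixing all coordinates except $x_u,x_v$, we can write
\begin{align*}
P_{F,K}(\mathbf{x}) &= A + B x_u + B' x_v + C x_u x_v,
\end{align*}
with $A,B,B',C\ge 0$ depending only on the other coordinates. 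Because the transposition of $u$ and $v$ is an automorphism of $K$, it maps $H_F(K)$ to itself, and therefore $B=B'$. So $P_{F,K}=A+B(x_u+x_v)+Cx_ux_v$.

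Next, apply the operation $\mathbf{x}\mapsto\mathbf{x}_{u,v}$ from the preliminaries; it preserves the $\alpha$-norm, so $\mathbf{x}_{u,v}\in\Delta_\alpha^{n-1}$. Set $m=\bigl((x_u^\alpha+x_v^\alpha)/2\bigr)^{1/\alpha}$. By the power mean inequality for $\alpha\ge1$, we have $2m\ge x_u+x_v$ and $m^2\ge x_ux_v$, where the second holds because the power mean dominates the geometric mean. Since $B,C\ge0$, it follows that $P_{F,K}(\mathbf{x}_{u,v})\ge P_{F,K}(\mathbf{x})$. In particular, if $\mathbf{x}\in\operatorname{OPT}_{\alpha,F}(K)$, then $\mathbf{x}_{u,v}\in\operatorname{OPT}_{\alpha,F}(K)$ as well.

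To finish, I would avoid an infinite iteration by an extremal choice. The set $\operatorname{OPT}_{\alpha,F}(K)$ is nonempty (a nonnegative optimizer exists, as noted in the introduction) and compact, being a closed subset of the compact set $\Delta_\alpha^{n-1}$. Define the continuous function
\begin{align*}
\Phi(\mathbf{x}) &= \sum_{i=1}^q\sum_{\{u,v\}\subseteq V_i}\bigl(x_u^\alpha-x_v^\alpha\bigr)^2,
\end{align*}
and choose $\mathbf{x}\in\operatorname{OPT}_{\alpha,F}(K)$ minimizing $\Phi$. Suppose some part $V_i$ contains $u,v$ with $x_u\ne x_v$. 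Passing to $y_w=x_w^\alpha$, the operation $\mathbf{x}_{u,v}$ replaces $y_u,y_v$ by their average and leaves the other $y_w$ unchanged. The sum of squared pairwise differences within $V_i$ equals $|V_i|$ times the sum of squared deviations from the mean of $V_i$, and this mean is unchanged. Replacing two distinct values by their average strictly decreases the sum of squared deviations, by $(y_u-y_v)^2/2$. So $\Phi(\mathbf{x}_{u,v})<\Phi(\mathbf{x})$ while $\mathbf{x}_{u,v}$ is still optimal, a contradiction. Hence $\mathbf{x}$ is constant on each part.

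The only step needing real care is the claim $B=B'$ together with nonnegativity of $C$. This is where we use that $u,v$ are twins in $K$, so swapping them preserves $H_F(K)$. Nonnegativity of the coefficients is automatic from the definition of $P_{F,K}$. Notably, the argument does not use that $F$ is complete multipartite; it only uses that vertices in the same part of $K$ are twins.
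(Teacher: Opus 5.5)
Your proposal is correct and follows essentially the paper's argument: average $x_u^\alpha$ and $x_v^\alpha$ for two vertices in the same part, which cannot decrease $P_{F,K}$ by the power-mean and AM--GM inequalities, and take an optimizer minimizing a potential that strictly drops under this averaging. The only difference is cosmetic: the paper uses $\Phi(\mathbf{x})=\sum_{v}x_v^{2\alpha}$ instead of your within-part variance of the $x_v^\alpha$, and your explicit justifications of $B=B'$ (via the twin transposition) and of the existence of a minimizer (via compactness of $\operatorname{OPT}_{\alpha,F}(K)$) fill in points the paper leaves implicit.
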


\begin{proof}
Among all nonnegative optimal vectors for $\lambda_\alpha^{\mathrm{ind}}(F,K)$,
choose one, say $\mathbf{x}$, which minimizes
\begin{align*}
\Phi(\mathbf{x})&\coloneqq\sum_{v\in V(K)}x_v^{2\alpha}.
\end{align*}
We show that this vector $\mathbf{x}$ satisfies the statement.
Suppose otherwise that there are two vertices $u,v$ in the same part with
$x_u\ne x_v$. Set
\begin{align*}
a&\coloneqq\left(\frac{x_u^\alpha+x_v^\alpha}{2}\right)^{1/\alpha}.
\end{align*}
For the feasible vector $\mathbf{x}_{u,v}$, we show that
$P_{F,K}(\mathbf{x}_{u,v})\ge P_{F,K}(\mathbf{x})$ but
$\Phi(\mathbf{x}_{u,v})< \Phi(\mathbf{x})$, which gives a contradiction. On
the one hand, since $u$ and $v$ are in the same part of
$K$, we have
$P_{F,K}(\mathbf{x})=A+B(x_u+x_v)+Cx_ux_v$ and
$P_{F,K}(\mathbf{x}_{u,v})=A+2aB+a^2C$,
where $B, C\ge 0$. Using the power-mean inequality and the AM--GM
inequality, we obtain
\[
2a\ge x_u+x_v,
\qquad\text{and}\qquad
a^2\ge x_ux_v.
\]
Hence $P_{F,K}(\mathbf{x}_{u,v})\ge P_{F,K}(\mathbf{x})$.
Since $\mathbf{x}_{u,v}$ is feasible and $\mathbf{x}$ is optimal, we also have
$P_{F,K}(\mathbf{x}_{u,v})\le \lambda_\alpha^{\mathrm{ind}}(F,K)
=P_{F,K}(\mathbf{x})$. Thus equality holds, and $\mathbf{x}_{u,v}$ is also
optimal.

On the other hand, since $t^2$ is strictly convex, by Jensen's inequality,
we obtain
\begin{align*}
2a^{2\alpha}
&=2\left(\frac{x_u^\alpha+x_v^\alpha}{2}\right)^2<x_u^{2\alpha}+x_v^{2\alpha},
\end{align*}
where the inequality holds as $x_u\ne x_v$. Thus
$\Phi(\mathbf{x}_{u,v})<\Phi(\mathbf{x})$, a contradiction. This completes
the proof.
\end{proof}

\begin{corollary}\label{cor:exact-complete-multipartite}
Let $F=K_{a_1,\ldots,a_r}$ be a complete $r$-partite graph with
$\ell=a_1+\cdots+a_r$. For every complete $q$-partite graph
$K_{n_1,\ldots,n_q}$ with $n_i>0$ for all $i\in[q]$, we have
\begin{align*}
\lambda_\alpha^{\mathrm{ind}}(F,K_{n_1,\ldots,n_q})
&=\frac{\ell!}{\operatorname{sym}(a_1,\ldots,a_r)}
\max_{\substack{(\mu_1,\ldots,\mu_q)\in \Delta_1^{q-1}}}\left\{
\sum_{(i_1,\ldots,i_r)\in [q]_r}
\prod_{j\in[r]}
\tbinom{n_{i_j}}{a_j}
\left(\tfrac{\mu_{i_j}}{n_{i_j}}\right)^{a_j/\alpha}
\right\}.
\end{align*}
\end{corollary}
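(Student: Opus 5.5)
By Lemma~\ref{lem:const}, the maximum defining $\lambda_\alpha^{\mathrm{ind}}(F,K_{n_1,\ldots,n_q})$ may be taken over nonnegative vectors that are constant on each part. The plan is to parametrize these vectors by the part masses and count induced copies of $F$ part by part. Write $V_1,\ldots,V_q$ for the parts, with $|V_i|=n_i$. A vector constant on the parts has $x_v=y_i$ for $v\in V_i$. Put $\mu_i\coloneqq n_iy_i^\alpha$. Then the constraint $\sum_v x_v^\alpha=1$ becomes $\sum_i\mu_i=1$ with $\mu_i\ge0$, that is $\boldsymbol{\mu}\in\Delta_1^{q-1}$, and $y_i=(\mu_i/n_i)^{1/\alpha}$. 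Conversely, every $\boldsymbol{\mu}\in\Delta_1^{q-1}$ gives such a feasible vector. So it suffices to compute $P_{F,K}$ on these vectors as a function of $\boldsymbol{\mu}$ and then maximize over the simplex.

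The combinatorial step is to describe $H_F(K)$. Take a set $S\subseteq V(K)$. Then $K[S]$ is the complete multipartite graph whose parts are the nonempty sets $S\cap V_i$, because vertices in the same part are nonadjacent and vertices in different parts are adjacent. Hence $K[S]\cong F$ if and only if $S$ meets exactly $r$ parts and the multiset of sizes $\{|S\cap V_i|\}$ equals $\{a_1,\ldots,a_r\}$. Such sets $S$ can be encoded by injective assignments $(i_1,\ldots,i_r)\in[q]_r$, meaning part $V_{i_j}$ receives $a_j$ vertices, together with a choice of an $a_j$-subset of $V_{i_j}$ for each $j$.

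The one point needing care is the overcount. Permuting the indices $j$ among equal values $a_j$ yields the same set $S$. Each $S$ is therefore obtained from exactly $\operatorname{sym}(a_1,\ldots,a_r)$ tuples in $[q]_r$, since the parts touched by $S$ are distinct and determined by $S$.

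With this encoding, each edge $S$ coming from the tuple $(i_1,\ldots,i_r)$ has weight $\prod_j y_{i_j}^{a_j}$. Summing over the $\binom{n_{i_j}}{a_j}$ choices in each part gives
\[
P_{F,K}(\mathbf{x})=\frac{\ell!}{\operatorname{sym}(a_1,\ldots,a_r)}\sum_{(i_1,\ldots,i_r)\in[q]_r}\prod_{j\in[r]}\binom{n_{i_j}}{a_j}\Bigl(\frac{\mu_{i_j}}{n_{i_j}}\Bigr)^{a_j/\alpha}.
\]
When $a_j>n_{i_j}$ the binomial coefficient vanishes, so those terms are automatically correct. Taking the maximum over $\boldsymbol{\mu}\in\Delta_1^{q-1}$ and using Lemma~\ref{lem:const} for the equality with $\lambda_\alpha^{\mathrm{ind}}$ yields the formula.

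I expect no real obstacle here. The only delicate point is justifying the $\operatorname{sym}$ factor: each induced copy corresponds to exactly $\operatorname{sym}(a_1,\ldots,a_r)$ ordered tuples, and no two distinct sets $S$ share a tuple together with its subset choices.
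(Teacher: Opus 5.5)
Your proposal is correct and takes the same route as the paper. You reduce via Lemma~\ref{lem:const} to part-constant vectors, substitute $\mu_i=n_iy_i^\alpha$, and note that every $\boldsymbol{\mu}\in\Delta_1^{q-1}$ gives a feasible vector; you also justify the $\operatorname{sym}(a_1,\ldots,a_r)$ overcount (each induced copy arises from exactly that many tuples in $[q]_r$), which the paper states without comment.
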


\begin{proof}
By Lemma~\ref{lem:const}, it suffices to consider feasible vectors that are
constant on each part of $K_{n_1,\ldots,n_q}$. For such a vector
$\mathbf{x}$, let $w_i$ denote the common value of $x_v$ on all vertices
in the $i$-th part, and set $\mu_i\coloneqq n_iw_i^\alpha$. Then
$\sum_{i\in[q]}\mu_i=1$, and
\begin{align*}
P_{F,K_{n_1,\ldots,n_q}}(\mathbf{x})
&=\frac{\ell!}{\operatorname{sym}(a_1,\ldots,a_r)}
\sum_{(i_1,\ldots,i_r)\in[q]_r}
\prod_{j\in[r]}\tbinom{n_{i_j}}{a_j}w_{i_j}^{a_j}\\
&=\frac{\ell!}{\operatorname{sym}(a_1,\ldots,a_r)}
\sum_{(i_1,\ldots,i_r)\in[q]_r}
\prod_{j\in[r]}
\tbinom{n_{i_j}}{a_j}
\left(\tfrac{\mu_{i_j}}{n_{i_j}}\right)^{a_j/\alpha}.
\end{align*}
Conversely, every $\boldsymbol{\mu}\in\Delta_1^{q-1}$ gives a feasible
vector by setting $w_i=(\mu_i/n_i)^{1/\alpha}$ on the $i$-th part. Taking
the maximum over all such $\boldsymbol{\mu}$ gives the formula.
\end{proof}

\begin{lemma}\label{lem:upper-complete-multipartite}
Let $F=K_{a_1,\ldots,a_r}$ be a complete $r$-partite graph with
$\ell=a_1+\cdots+a_r$, and let $G=K_{n_1,\ldots,n_q}$ be an $n$-vertex
complete $q$-partite graph with positive part sizes. Let
$\boldsymbol{\mu}=(\mu_1,\ldots,\mu_q)$ be
an optimal vector in Corollary~\ref{cor:exact-complete-multipartite}, and
define $\mathbf{z}=(z_1,\ldots,z_q)$ by 
\begin{align*}
z_i&\coloneqq\left(\frac{n_i}{n}\right)^{1-1/\alpha}\mu_i^{1/\alpha}
\quad \text{for}\quad i\in[q].
\end{align*}
Then $\sum_{i\in[q]}z_i\le1$ and
\begin{align*}
\lambda_\alpha^{\mathrm{ind}}(F,G)
\le
\kappa_F n^{\ell(1-1/\alpha)}
\sum_{(i_1,\ldots,i_r)\in[q]_r}
\prod_{j\in[r]}z_{i_j}^{a_j}
\le i(F)n^{\ell(1-1/\alpha)}.
\end{align*}
\end{lemma}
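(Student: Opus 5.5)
Three steps: show $\sum_i z_i\le 1$, bound the formula of Corollary~\ref{cor:exact-complete-multipartite} term by term, and compare the resulting polynomial with $i(F)$ via blow-ups.

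\textbf{Step 1: $\sum_i z_i\le 1$.} Write $z_i=(n_i/n)^{1-1/\alpha}\mu_i^{1/\alpha}$. Apply H\"older's inequality with exponents $\alpha/(\alpha-1)$ and $\alpha$, or equivalently weighted AM--GM:
\[
\sum_i z_i\le \Bigl(\sum_i n_i/n\Bigr)^{1-1/\alpha}\Bigl(\sum_i\mu_i\Bigr)^{1/\alpha}=1 .
\]
For $\alpha=1$ we simply have $z_i=\mu_i$, so the sum equals $1$.

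\textbf{Step 2: the first inequality.} Start from the formula of Corollary~\ref{cor:exact-complete-multipartite} evaluated at the optimal $\boldsymbol\mu$. Each factor satisfies
\[
\tbinom{n_{i_j}}{a_j}\left(\tfrac{\mu_{i_j}}{n_{i_j}}\right)^{a_j/\alpha}\le \frac{n_{i_j}^{a_j}}{a_j!}\,\frac{\mu_{i_j}^{a_j/\alpha}}{n_{i_j}^{a_j/\alpha}}=\frac{1}{a_j!}\,n_{i_j}^{a_j(1-1/\alpha)}\mu_{i_j}^{a_j/\alpha}.
\]
The right-hand side equals $\frac{1}{a_j!}\,n^{a_j(1-1/\alpha)}z_{i_j}^{a_j}$. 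Multiplying over $j\in[r]$ gives the total power $n^{\ell(1-1/\alpha)}$. Combining the factor $\ell!/\operatorname{sym}$ with $\prod_j 1/a_j!$ gives exactly $\kappa_F$. This proves the first inequality.

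\textbf{Step 3: the second inequality.} We need to show $\kappa_F\,p(\mathbf z)\le i(F)$, where $p(\mathbf z)=\sum_{[q]_r}\prod_j z_{i_j}^{a_j}$ has nonnegative coefficients. By Step 1, scaling $\mathbf z$ up to a point of the simplex $\Delta_1^{q-1}$ can only increase $p$, so we may assume $\sum_i z_i=1$.

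Then use a blow-up argument. For large $N$, take the complete $q$-partite graph on $N$ vertices with part sizes $\lfloor z_iN\rfloor$ (distributing the rounding leftovers arbitrarily). An induced copy of $F$ in a complete multipartite graph is obtained by choosing $r$ distinct parts and $a_j$ vertices from the $j$-th chosen part. Counting ordered assignments, and dividing by $\operatorname{sym}$ to account for permuting parts of $F$ of equal size, gives
\[
I(F,G_N)=\frac{1}{\operatorname{sym}(a_1,\ldots,a_r)}\sum_{[q]_r}\prod_j\tbinom{\lfloor z_{i_j}N\rfloor}{a_j}=\frac{\kappa_F\,p(\mathbf z)}{\ell!}\,N^\ell+O(N^{\ell-1}).
\]
Dividing by $\binom N\ell$ and letting $N\to\infty$ gives $\kappa_F\,p(\mathbf z)\le \lim I(F,N)/\binom N\ell=i(F)$.

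\textbf{Main obstacle.} The one delicate point is the bookkeeping of the symmetry factor $\operatorname{sym}$: the count of induced copies over $[q]_r$ must match $\kappa_F$ exactly, with no overcounting when several $a_j$ coincide. I would check this against the same counting already used in Corollary~\ref{cor:exact-complete-multipartite}.
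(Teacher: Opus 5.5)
Your proposal is correct and follows the paper's proof essentially step for step: H\"older's inequality gives $\sum_i z_i\le 1$, the termwise bound $\binom{m}{p}\le m^p/p!$ gives the first inequality, and a complete multipartite blow-up compared against $i(F)$ gives the second. The only cosmetic difference is that you normalize $\mathbf z$ by scaling it up to the simplex, using monotonicity of $p$ on the nonnegative orthant, whereas the paper appends a dummy part of weight $z_{q+1}=1-\sum_i z_i$; your explicit check of the $\operatorname{sym}$ count is correct.
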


\begin{proof}
By the definition of $\mathbf{z}$ when $\alpha=1$, and by H\"older's
inequality when $\alpha>1$, we have
\begin{align*}
\sum_{i\in[q]} z_i
=\sum_{i\in[q]}\left(\frac{n_i}{n}\right)^{1-1/\alpha}\mu_i^{1/\alpha}
\le\Big(\sum_{i\in[q]}\frac{n_i}{n}\Big)^{1-1/\alpha}
\Big(\sum_{i\in[q]}\mu_i\Big)^{1/\alpha}
=1.
\end{align*}
For the upper bound, since $\binom{m}{p}\le m^p/p!$ for every $m,p\ge1$,
we have
\begin{align*}
\binom{n_i}{p}\left(\frac{\mu_i}{n_i}\right)^{p/\alpha}
&\le \frac{n^{p-p/\alpha}}{p!}z_i^p.
\end{align*}
Using Corollary~\ref{cor:exact-complete-multipartite}, we obtain
\begin{align*}
\lambda_\alpha^{\mathrm{ind}}(F,G)
&\le
\frac{\ell!}{\operatorname{sym}(a_1,\ldots,a_r)}
\sum_{(i_1,\ldots,i_r)\in [q]_r}
\prod_{j\in[r]}
\frac{n^{a_j(1-1/\alpha)}}{a_j!}z_{i_j}^{a_j}
=
\kappa_F n^{\ell(1-1/\alpha)}
\sum_{(i_1,\ldots,i_r)\in[q]_r}
\prod_{j\in[r]}z_{i_j}^{a_j}.
\end{align*}
It remains to justify the last inequality. Let
$z_{q+1}=1-\sum_{i\in[q]}z_i$. Choose nonnegative integers $m_1,\ldots,m_{q+1}$
with $\sum_{i\in[q+1]}m_i=N$ and $m_i/N\to z_i$, and let
$H_N=K_{m_1,\ldots,m_{q+1}}$, omitting parts of size zero. Then
\begin{align*}
\frac{I(F,H_N)}{\binom{N}{\ell}}
&\to
\kappa_F
\sum_{(i_1,\ldots,i_r)\in[q+1]_r}
\prod_{j\in[r]}z_{i_j}^{a_j}
\end{align*}
as $N\to\infty$. Since $I(F,H_N)\le I(F,N)$, the limit above is at most
$i(F)$. Therefore
\begin{align*}
\kappa_F
\sum_{(i_1,\ldots,i_r)\in[q]_r}
\prod_{j\in[r]}z_{i_j}^{a_j}
&\le
\kappa_F
\sum_{(i_1,\ldots,i_r)\in[q+1]_r}
\prod_{j\in[r]}z_{i_j}^{a_j}
\le i(F),
\end{align*}
which proves the desired bound.
This completes the proof.
\end{proof}

\section{Proof of Theorem~\ref{thm:cmp-ext}}\label{sec:cmp-ext-proof}
We prove Theorem~\ref{thm:cmp-ext} in this section. Let $F$ be a complete
multipartite graph. Let $G$ be an $n$-vertex graph such that
$\lambda_\alpha^{\mathrm{ind}}(F,G)
=\operatorname{spec}_\alpha^{\mathrm{ind}}(F,n)$.
Let $\mathbf{x}=(x_v)_{v\in V(G)}\in \operatorname{OPT}_{\alpha,F}(G)$ be a
nonnegative vector. For $u,v\in V(G)$, write
\begin{align*}
P_{F,G}(\mathbf{x})&=A+x_uB_u+x_vB_v+x_ux_vC,
\end{align*}
where $A$, $B_u$, $B_v$ and $C$ are independent of $x_u$ and $x_v$.

\medskip
The key ingredient of the proof is the following lemma.
\medskip
\begin{lemma}\label{lem:sym}
Let $u,v\in V(G)$ be two nonadjacent vertices with $x_ux_v\neq0$. If $B_v\ge B_u$, then 
\begin{align*}
P_{F,G_{u\to v}}(\mathbf{x}_{u,v})-P_{F,G}(\mathbf{x})\ge 0.
\end{align*}
Moreover, the inequality is strict if $B_v>B_u$.
\end{lemma}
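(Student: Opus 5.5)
The plan is to compare the two polynomials term by term, using the decomposition $P_{F,G}(\mathbf{x})=A+x_uB_u+x_vB_v+x_ux_vC$. Write $G'\coloneqq G_{u\to v}$ and
\[
P_{F,G'}(\mathbf{y})=A'+y_uB_u'+y_vB_v'+y_uy_vC'
\]
for the analogous decomposition with respect to $G'$, where $A',B_u',B_v',C'$ are evaluated at the entries $x_w$, $w\ne u,v$ (these entries are the same in $\mathbf{x}$ and $\mathbf{x}_{u,v}$). Put $a\coloneqq\bigl((x_u^\alpha+x_v^\alpha)/2\bigr)^{1/\alpha}$. I will show three things.
\begin{enumerate}[label=(\alph*)]
\item $A'=A$ and $B_v'=B_v$.
\item $B_u'=B_v$.
\item $C'\ge C$.
\end{enumerate}
Then I finish with the power-mean and AM--GM inequalities, exactly as in the proof of Lemma~\ref{lem:const}.

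For (a), note that the symmetrization only changes edges incident to $u$. Hence $G'-u=G-u$. Every $\ell$-set avoiding $u$ therefore induces the same graph in $G$ and in $G'$, which gives $A'=A$ and $B_v'=B_v$. For (b), observe that in $G'$ the vertices $u$ and $v$ are nonadjacent twins. This uses that $u,v$ are nonadjacent in $G$, so $v\notin N_G(v)$ is not added to $N_{G'}(u)$. Consequently, for an $(\ell-1)$-set $S$ avoiding $u$ and $v$, the graph $G'[S\cup\{u\}]$ is isomorphic to $G'[S\cup\{v\}]=G[S\cup\{v\}]$. Hence $B_u'=B_v$.

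For (c), I will use that $F$ is complete multipartite. In $F$, any two nonadjacent vertices lie in the same part and are twins. So if $S\ni u,v$ induces a copy of $F$ in $G$, then $u$ and $v$ have the same neighbourhood inside $S$. Replacing $N(u)$ by $N(v)$ therefore leaves $G[S]$ unchanged, and $S$ still induces $F$ in $G'$. Every monomial counted in $C$ is thus also counted in $C'$, and all coefficients are nonnegative, so $C'\ge C$. This step is the crux, and it is the only place the structure of $F$ is used. I expect it to be the main point to state carefully, though I do not see a real obstacle.

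Finally, I will combine these facts:
\[
P_{F,G'}(\mathbf{x}_{u,v})-P_{F,G}(\mathbf{x})
=2aB_v+a^2C'-x_uB_u-x_vB_v-x_ux_vC .
\]
Using $C'\ge C\ge0$, this is at least
\[
(2a-x_u-x_v)B_v+x_u(B_v-B_u)+(a^2-x_ux_v)C .
\]
Here $2a\ge x_u+x_v$ by the power-mean inequality (as $\alpha\ge1$), and $a^2\ge x_ux_v$ by AM--GM combined with power-mean. Moreover $B_v\ge0$, $C\ge0$ and $B_v\ge B_u$. So every term is nonnegative. If $B_v>B_u$, then $x_u>0$ (recall $x_ux_v\ne0$) makes the middle term strictly positive, which gives strict inequality.
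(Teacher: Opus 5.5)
Your proposal is correct and follows the paper's proof essentially step for step: the same decomposition with $A$ unchanged, both linear coefficients becoming $B_v$ (you argue via $u,v$ being twins in $G_{u\to v}$, the paper via the equivalent statement $G_{u\to v}[\{u\}\cup Y]\cong F \Leftrightarrow G[\{v\}\cup Y]\cong F$), $C'\ge C$ because nonadjacent vertices of a complete multipartite $F$ are twins, and the same power-mean/AM--GM finish with strictness coming from $x_u>0$. Your explicit regrouping with the term $x_u(B_v-B_u)$ only makes the paper's final inequality chain slightly more transparent.
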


\begin{proof}
For convenience, set
\begin{align*}
a&\coloneqq\left(\frac{x_u^\alpha+x_v^\alpha}{2}\right)^{1/\alpha}.
\end{align*}
Since $u$ and $v$ have the same neighbors in $G_{u\to v}$, we have
\begin{align*}
P_{F,G_{u\to v}}(\mathbf{x}_{u,v})=A+2aB_v+a^2C',
\end{align*}
where $C'$ is independent of $a$. Here the term $A$ is unchanged because
Zykov symmetrization only changes edges incident with $u$, and therefore it
does not affect any $\ell$-set avoiding both $u$ and $v$. The coefficient of
the $v$-only terms remains $B_v$. The coefficient of the $u$-only terms is
also $B_v$: for every $Y\subseteq V(G)\setminus\{u,v\}$ with $|Y|=\ell-1$,
\[
G_{u\to v}[\{u\}\cup Y]\cong F
\quad\Longleftrightarrow\quad
G[\{v\}\cup Y]\cong F.
\]

We claim that $C'\ge C$. Indeed, let $X$ be the vertex set of an induced copy
of $F$ in $G$ containing both $u$ and $v$. Since $u$ and $v$ are nonadjacent
and $F$ is complete multipartite, they must belong to the same part of this
copy. Hence, for every $w\in X\setminus\{u,v\}$, the adjacencies of $u$ and
$v$ to $w$ coincide inside $G[X]$. After symmetrizing $u$ toward $v$, the
adjacency between $u$ and every vertex of $X\setminus\{u,v\}$ is unchanged,
and all other adjacencies inside $X$ are unchanged as well. Thus $X$ still
induces a copy of $F$. Therefore every monomial contributing to $C$ also
contributes to $C'$, so $C'\ge C$.

By the power-mean inequality and the AM--GM inequality, we have
\begin{align*}
2a\ge x_u+x_v,
\text{ and } a^2\ge x_ux_v.
\end{align*}
Since $C$ and $C'$ are nonnegative coefficients and $C'\ge C$, we also have
$a^2C'-x_ux_vC\ge0$.
Hence
\begin{align*}
    P_{F,G_{u\to v}}(\mathbf{x}_{u,v})-P_{F,G}(\mathbf{x})&= 2aB_v+a^2C'-x_uB_u-x_vB_v-x_ux_vC\\[2mm]
    &\ge a^2C'-x_ux_vC+(2a-x_u-x_v)B_v\ge0,
\end{align*}
where the first inequality follows from $B_v\ge B_u$.
Moreover, since $x_u>0$, the first inequality is strict if $B_v>B_u$. The result follows.
\end{proof}
\medskip
Recall that $G$ is an $n$-vertex graph satisfying $\lambda_\alpha^{\mathrm{ind}}(F,G)=\operatorname{spec}_\alpha^{\mathrm{ind}}(F,n)$.
\medskip
\begin{corollary}\label{B_vB_u}
Let $u,v\in V(G)$ be two nonadjacent vertices with $x_ux_v\neq0$. Then $B_v=B_u$.
\end{corollary}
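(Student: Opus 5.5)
We argue by contradiction using Lemma~\ref{lem:sym} and the extremality of $G$. Suppose $B_v\neq B_u$. By symmetry of the roles of $u$ and $v$ (the decomposition $P_{F,G}(\mathbf{x})=A+x_uB_u+x_vB_v+x_ux_vC$ is symmetric, and $G_{v\to u}$ is defined analogously), we may assume $B_v>B_u$; otherwise swap the names of $u$ and $v$ and symmetrize $v$ toward $u$ instead.

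Since $u,v$ are nonadjacent and $x_ux_v\neq0$, the strict part of Lemma~\ref{lem:sym} gives
\begin{align*}
P_{F,G_{u\to v}}(\mathbf{x}_{u,v})>P_{F,G}(\mathbf{x})=\lambda_\alpha^{\mathrm{ind}}(F,G)=\operatorname{spec}_\alpha^{\mathrm{ind}}(F,n).
\end{align*}
As observed in Section~\ref{sec:prelim}, $\|\mathbf{x}_{u,v}\|_\alpha=\|\mathbf{x}\|_\alpha=1$, and $\mathbf{x}_{u,v}$ is nonnegative, so $\mathbf{x}_{u,v}\in\Delta_\alpha^{n-1}$ is feasible for $G_{u\to v}$. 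Hence $\lambda_\alpha^{\mathrm{ind}}(F,G_{u\to v})\ge P_{F,G_{u\to v}}(\mathbf{x}_{u,v})$. Since $G_{u\to v}$ is again an $n$-vertex graph, its value is at most $\operatorname{spec}_\alpha^{\mathrm{ind}}(F,n)$, which yields a contradiction. Therefore $B_v=B_u$.

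There is no real obstacle here; the only points to check are that the symmetrized vector stays on $\Delta_\alpha^{n-1}$ and that the hypotheses of Lemma~\ref{lem:sym} (nonadjacency and $x_ux_v\ne0$) are symmetric in $u$ and $v$, so the WLOG step is legitimate.
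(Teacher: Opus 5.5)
Your proposal is correct and follows essentially the same route as the paper: assume $B_v>B_u$ (or swap the roles and use $G_{v\to u}$), apply the strict case of Lemma~\ref{lem:sym} to get a value exceeding $\operatorname{spec}_\alpha^{\mathrm{ind}}(F,n)$ on the $n$-vertex graph $G_{u\to v}$, and conclude by contradiction. Your explicit check that $\mathbf{x}_{u,v}\in\Delta_\alpha^{n-1}$ is feasible for $G_{u\to v}$ is a step the paper leaves implicit.
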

\begin{proof}
If $B_v>B_u$, then Lemma~\ref{lem:sym} gives
\[
P_{F,G_{u\to v}}(\mathbf{x}_{u,v})
>P_{F,G}(\mathbf{x})
=\operatorname{spec}_\alpha^{\mathrm{ind}}(F,n),
\]
contradicting the definition of
$\operatorname{spec}_\alpha^{\mathrm{ind}}(F,n)$. Similarly, if
$B_u>B_v$, applying Lemma~\ref{lem:sym} with the roles of $u$ and $v$
interchanged gives the same contradiction for $G_{v\to u}$. Hence
$B_u=B_v$.
\end{proof}
\medskip
Combining Lemma~\ref{lem:sym} and Corollary~\ref{B_vB_u}, we obtain the following result.
\medskip
\begin{corollary}\label{core}
Let $u,v\in V(G)$ be two nonadjacent vertices with $x_ux_v\neq0$. Then
\begin{align*}
\lambda_\alpha^{\mathrm{ind}}(F,G_{u\to v})=\lambda_\alpha^{\mathrm{ind}}(F,G_{v\to u})=\operatorname{spec}_\alpha^{\mathrm{ind}}(F,n),
\end{align*}
and $\mathbf{x}_{u,v}$ belongs to both $\operatorname{OPT}_{\alpha,F}(G_{u\to v})$ and $\operatorname{OPT}_{\alpha,F}(G_{v\to u})$.
\end{corollary}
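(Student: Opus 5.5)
By Corollary~\ref{B_vB_u} we already know $B_u=B_v$. We then apply Lemma~\ref{lem:sym} twice. First take it as stated, with $B_v\ge B_u$. It gives
\[
P_{F,G_{u\to v}}(\mathbf{x}_{u,v})\ge P_{F,G}(\mathbf{x})=\operatorname{spec}_\alpha^{\mathrm{ind}}(F,n).
\]
Second, swap the roles of $u$ and $v$; this is allowed since $B_u\ge B_v$ also holds. It gives
\[
P_{F,G_{v\to u}}(\mathbf{x}_{u,v})\ge P_{F,G}(\mathbf{x})=\operatorname{spec}_\alpha^{\mathrm{ind}}(F,n).
\]
We use the same vector $\mathbf{x}_{u,v}$ in both cases, because its definition is symmetric in $u$ and $v$.

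Next we observe that $\mathbf{x}_{u,v}\in\Delta_\alpha^{n-1}$. It is nonnegative, and $\|\mathbf{x}_{u,v}\|_\alpha=\|\mathbf{x}\|_\alpha=1$, as noted in the preliminaries. So $\mathbf{x}_{u,v}$ is feasible for both $G_{u\to v}$ and $G_{v\to u}$. This gives
\[
\operatorname{spec}_\alpha^{\mathrm{ind}}(F,n)\le P_{F,G_{u\to v}}(\mathbf{x}_{u,v})\le \lambda_\alpha^{\mathrm{ind}}(F,G_{u\to v})\le \operatorname{spec}_\alpha^{\mathrm{ind}}(F,n).
\]
The last inequality holds because $G_{u\to v}$ is an $n$-vertex graph. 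The same chain holds for $G_{v\to u}$.

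Hence every inequality in both chains is an equality. This yields $\lambda_\alpha^{\mathrm{ind}}(F,G_{u\to v})=\lambda_\alpha^{\mathrm{ind}}(F,G_{v\to u})=\operatorname{spec}_\alpha^{\mathrm{ind}}(F,n)$. It also shows that $\mathbf{x}_{u,v}$ attains the maximum for each graph, so it lies in both $\operatorname{OPT}_{\alpha,F}(G_{u\to v})$ and $\operatorname{OPT}_{\alpha,F}(G_{v\to u})$.

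The only point that needs care is the hypothesis check in the second application of Lemma~\ref{lem:sym}, where the roles of $u$ and $v$ are interchanged. That hypothesis is symmetric: $u,v$ are nonadjacent and $x_ux_v\ne0$. The decomposition $A+x_uB_u+x_vB_v+x_ux_vC$ is also symmetric in $u$ and $v$. So the lemma applies directly, and no real obstacle arises.
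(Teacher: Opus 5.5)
Your proof is correct and matches the paper's argument. You take $B_u=B_v$ from Corollary~\ref{B_vB_u}, apply Lemma~\ref{lem:sym} in each direction to get $P_{F,G_{u\to v}}(\mathbf{x}_{u,v})\ge\operatorname{spec}_\alpha^{\mathrm{ind}}(F,n)$ and likewise for $G_{v\to u}$, and then sandwich with $\lambda_\alpha^{\mathrm{ind}}\le\operatorname{spec}_\alpha^{\mathrm{ind}}(F,n)$ to force equality; your explicit checks that $\mathbf{x}_{u,v}=\mathbf{x}_{v,u}$ and $\mathbf{x}_{u,v}\in\Delta_\alpha^{n-1}$ fill in details the paper leaves implicit.
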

\begin{proof}
By Corollary~\ref{B_vB_u} and Lemma~\ref{lem:sym},
\[
P_{F,G_{u\to v}}(\mathbf{x}_{u,v})\ge P_{F,G}(\mathbf{x})
=\operatorname{spec}_\alpha^{\mathrm{ind}}(F,n).
\]
Since $\lambda_\alpha^{\mathrm{ind}}(F,G_{u\to v})\le
\operatorname{spec}_\alpha^{\mathrm{ind}}(F,n)$ and
$\lambda_\alpha^{\mathrm{ind}}(F,G_{u\to v})\ge
P_{F,G_{u\to v}}(\mathbf{x}_{u,v})$, equality must hold throughout, and
$\mathbf{x}_{u,v}$ is optimal for $G_{u\to v}$. The same argument with $u$
and $v$ interchanged gives the assertion for $G_{v\to u}$.
\end{proof}
Next, we complete the proof of Theorem~\ref{thm:cmp-ext}.
\medskip
\begin{proof}[Proof of Theorem~\ref{thm:cmp-ext}]
We begin with part~\ref{thmi} of Theorem~\ref{thm:cmp-ext}.
Among all pairs $(G,\mathbf{x})$ with nonnegative $\mathbf{x}\in \operatorname{OPT}_{\alpha,F}(G)$, choose one for which $\mathcal{T}(G,\mathbf{x})$ is maximal.
We show that $G[\supp(\mathbf{x})]$ is complete multipartite. Suppose
otherwise. We use the standard characterization that a graph is complete
multipartite if and only if every pair of nonadjacent vertices are twins.
Thus there exist nonadjacent vertices $u,v\in \supp(\mathbf{x})$ that are not
twins.
Let the equivalence classes of $u$ and $v$ in $G[\supp(\mathbf{x})]$ have sizes $a$ and $b$, respectively. 
We apply Zykov symmetrization to $u$ and $v$.
By Corollary~\ref{core}, we have 
\begin{align*}
\lambda_\alpha^{\mathrm{ind}}(F,G_{u\to v})=\lambda_\alpha^{\mathrm{ind}}(F,G_{v\to u})=\operatorname{spec}_\alpha^{\mathrm{ind}}(F,n).
\end{align*}
Consider the value
$\mathcal{T}(G_{u\to v},\mathbf{x}_{u,v})-\mathcal{T}(G,\mathbf{x})$.
Since $\mathbf{x}_{u,v}$ has the same support as $\mathbf{x}$, old
twin-pairs not involving $u$ are preserved. Indeed, symmetrization only
changes adjacencies incident with $u$; if $p,q\ne u$ were twins before the
symmetrization, then they had the same adjacency to $v$, and hence after the
symmetrization they also have the same adjacency to $u$. The vertex $u$ may
leave its former twin-class of size $a$ and join the twin-class of $v$ of size
$b$, and additional twin-pairs may also be created. Therefore,
\begin{align*}
\mathcal{T}(G_{u\to v},\mathbf{x}_{u,v})-\mathcal{T}(G,\mathbf{x})
&\ge
\tbinom{a-1}{2}+\tbinom{b+1}{2}-\tbinom{a}{2}-\tbinom{b}{2}
=b-a+1.
\end{align*}
Similarly,
\begin{align*}
\mathcal{T}(G_{v\to u},\mathbf{x}_{u,v})-\mathcal{T}(G,\mathbf{x})
\ge a-b+1.
\end{align*}
At least one of the two integers $b-a+1$ and $a-b+1$ is positive. Without
loss of generality, assume that $b-a+1>0$. Then $G_{u\to v}$ is another
extremal graph with
$\mathcal{T}(G_{u\to v},\mathbf{x}_{u,v})>\mathcal{T}(G,\mathbf{x})$,
contradicting the choice of $(G,\mathbf{x})$. Therefore
$G[\supp(\mathbf{x})]$ must be complete multipartite.

We now construct a complete multipartite extremal graph from $G$. If
$|\supp(\mathbf{x})|=n$, then $G$ itself is the desired extremal graph.
So we may assume that $|\supp(\mathbf{x})|<n$. Let
$V_1,\ldots,V_s$ be the complete multipartition of
$G[\supp(\mathbf{x})]$. Define $G^\ast$ to be the complete $s$-partite graph
obtained by putting all vertices outside $\supp(\mathbf{x})$ into $V_1$.
Then $P_{F,G^\ast}(\mathbf{x})=P_{F,G}(\mathbf{x})$, since every $\ell$-set
whose induced status changes contains at least one zero-weight vertex. Hence
$\lambda_\alpha^{\mathrm{ind}}(F,G^\ast)\ge
\operatorname{spec}_\alpha^{\mathrm{ind}}(F,n)$, while the reverse inequality
follows from the definition of $\operatorname{spec}_\alpha^{\mathrm{ind}}(F,n)$.
Thus
$\lambda_\alpha^{\mathrm{ind}}(F,G^\ast)
=\operatorname{spec}_\alpha^{\mathrm{ind}}(F,n)$. This completes the proof of
part~\ref{thmi}. The statement in part~\ref{thmii} follows directly from
Lemma~\ref{lem:const} applied to any complete multipartite graph satisfying
$\lambda_\alpha^{\mathrm{ind}}(F,G)=\operatorname{spec}_\alpha^{\mathrm{ind}}(F,n)$.

It remains to prove (iii). By part~\ref{thmi}, for every $n$ there exists
an $n$-vertex complete multipartite graph $H_n$ such that
$\operatorname{spec}_\alpha^{\mathrm{ind}}(F,n)
=\lambda_\alpha^{\mathrm{ind}}(F,H_n)$. Lemma~\ref{lem:upper-complete-multipartite}
gives $\operatorname{spec}_\alpha^{\mathrm{ind}}(F,n)
\le i(F)n^{\ell(1-1/\alpha)}$. 
For the lower bound, choose an $n$-vertex graph $G_n$ with
$I(F,G_n)=I(F,n)$. Since $I(F,n)/\binom{n}{\ell}\to i(F)$, we have $I(F,G_n)=\left(i(F)+o(1)\right)\binom n\ell$, 
and hence $\ell!I(F,G_n)=\left(i(F)+o(1)\right)n^\ell$.
Taking the uniform feasible vector on $V(G_n)$, we obtain
\begin{align*}
\operatorname{spec}_\alpha^{\mathrm{ind}}(F,n)
\ge \lambda_\alpha^{\mathrm{ind}}(F,G_n)
\ge \ell!I(F,G_n)n^{-\ell/\alpha}
=\left(i(F)+o(1)\right)n^{\ell(1-1/\alpha)}.
\end{align*}
Combining the upper and lower bounds proves (iii).
This completes the proof.
\end{proof}

\section{Proof of Theorem~\ref{thm:star}}\label{sec:star-proof}
In this section, we give a proof of Theorem~\ref{thm:star}.
The statement is trivial for $n \le t$, since no graph on $n$ vertices contains an induced copy of $K_{1,t}$. Thus, assume that $n \ge t+1$. 
By Theorem~\ref{thm:cmp-ext}, there exists an $n$-vertex complete $q$-partite graph $G = K_{n_1,\dots,n_q}$ with parts $V_1, V_2, \dots, V_q$, where
\begin{align*}
n_i&\coloneqq |V_i| \quad \text{for } i\in[q],
\end{align*}
such that $\lambda_\alpha^{\mathrm{ind}}(K_{1,t},G) = \operatorname{spec}_\alpha^{\operatorname{ind}}(K_{1,t},n)$.
Moreover, we can choose an optimal nonnegative vector
$\mathbf{x}\in \operatorname{OPT}_{\alpha,K_{1,t}}(G)$ that is constant on
each $V_i$, say with value $w_i$. Then
\begin{align}
P_{K_{1,t},G}(\mathbf{x})
&= (t+1)! \sum_{(i,j)\in [q]_2}\binom{n_i}{t} n_j w_i^t w_j.
\label{eq:star-formula}
\end{align}
where $\sum_{i\in[q]} n_i w_i^\alpha = 1$.

\begin{lemma}\label{lem:star-reduction}
For every integer $q \ge 3$, if at least three of the numbers $w_1,\ldots,w_q$ are positive, then there exists a complete $(q-1)$-partite graph $G'$ and a feasible vector $\mathbf{x}'$ such that
\begin{align*}
P_{K_{1,t},G'}(\mathbf{x}') \ge P_{K_{1,t},G}(\mathbf{x}).
\end{align*}
\end{lemma}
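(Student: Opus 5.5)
The plan is to merge two parts of $G$ into a single part and keep the vector $\mathbf{x}$ unchanged, vertex by vertex. Concretely, set $\mathbf{x}'\coloneqq\mathbf{x}$, which is automatically feasible. Among the indices with $w_i>0$, choose two, $i\neq j$, for which $n_iw_i$ and $n_jw_j$ are the two smallest values. Let $G'$ be the complete $(q-1)$-partite graph obtained from $G$ by replacing $V_i$ and $V_j$ with the single part $V_i\cup V_j$. The vector $\mathbf{x}'$ is no longer constant on the merged part, but the statement only asks for a feasible vector, so this is harmless. Write $M\coloneqq\sum_{k\ne i,j}n_kw_k$. Since a third positive weight $w_k$ exists, the choice of $i,j$ gives $M\ge n_kw_k\ge\max\{n_iw_i,n_jw_j\}$.

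Next, compare $P_{K_{1,t},G'}(\mathbf{x})$ with \eqref{eq:star-formula} term by term. An induced star in a complete multipartite graph consists of a center in one part and $t$ leaves inside a different part. Stars whose center and leaves avoid $V_i\cup V_j$, or whose leaves lie in some $V_k$ with $k\ne i,j$, are unaffected by the merge. The changes are of two kinds.
\begin{itemize}
\item \emph{Losses.} We lose the stars with center in $V_j$ and leaves in $V_i$, and those with center in $V_i$ and leaves in $V_j$. Their total weight is $(t+1)!\bigl[\binom{n_i}{t}w_i^t\,n_jw_j+\binom{n_j}{t}w_j^t\,n_iw_i\bigr]$.
\item \emph{Gains.} We gain every $t$-set in $V_i\cup V_j$ meeting both $V_i$ and $V_j$, joined to a center in some $V_k$ with $k\ne i,j$. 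Keeping only the mixed $t$-sets of type $(t-1,1)$ and $(1,t-1)$, the gain is at least $(t+1)!\,M\bigl[\binom{n_i}{t-1}w_i^{t-1}n_jw_j+\binom{n_j}{t-1}w_j^{t-1}n_iw_i\bigr]$. For $t=2$ these two types are the same sets, so I would take only one copy and adjust the comparison; this case needs a brief separate check.
\end{itemize}

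It then suffices to dominate each loss term by the matching gain term. If $n_i<t$, the loss term $\binom{n_i}{t}$ vanishes and there is nothing to prove. If $n_i\ge t$, use $\binom{n_i}{t}=\binom{n_i}{t-1}\frac{n_i-t+1}{t}$, so the required inequality reduces to $w_i\frac{n_i-t+1}{t}\le M$. This follows from $w_i(n_i-t+1)/t\le n_iw_i\le M$. The same argument with $i$ and $j$ swapped handles the other loss term, and adding the two comparisons gives $P_{K_{1,t},G'}(\mathbf{x}')\ge P_{K_{1,t},G}(\mathbf{x})$.

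The main obstacle I expect is the bookkeeping at the boundary. The $t=2$ overlap of the two mixed types must not be double counted. If it cannot be absorbed by the crude bound, I would instead use the full mixed count $\binom{n_i+n_j}{2}-\binom{n_i}{2}-\binom{n_j}{2}=n_in_j$ together with the factor-$t$ slack in $w_i(n_i-1)/2\le n_iw_i/2$, which gives the needed room. Beyond that, the key point is that the pair is chosen with the smallest values of $n_iw_i$, which guarantees $M\ge\max\{n_iw_i,n_jw_j\}$.
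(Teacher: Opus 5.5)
Your argument is correct, and it takes a genuinely different route from the paper's.

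The paper also merges the two parts with the smallest values of $n_iw_i$. However, it replaces $w_1,w_2$ by their power mean $w$, so that $\mathbf{x}'$ stays constant on parts and \eqref{eq:star-formula} still applies. This forces it to use H\"older's inequality $(n_1+n_2)w\ge n_1w_1+n_2w_2$ and the monotonicity of $\gamma_t(m)=\binom{m}{t}/m^t$ to compare $\binom{n_1+n_2}{t}w^t$ with $\binom{n_i}{t}w_i^t$. It then finishes with $(a_1+a_2)^t-a_1^t-a_2^t\ge ta_1a_2^{t-1}$, $S\ge a_2$ and $t\ge2$.

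By keeping $\mathbf{x}$ fixed vertex by vertex, you get the change in $P$ exactly. It is $(t+1)!$ times $M\sum_{s=1}^{t-1}\binom{n_i}{s}\binom{n_j}{t-s}w_i^sw_j^{t-s}$ minus the two cross-star losses. For $t\ge3$, the identity $\binom{n}{t}=\binom{n}{t-1}\frac{n-t+1}{t}$ together with $M\ge\max\{n_iw_i,n_jw_j\}$ dominates each loss by a distinct gain term. Your fallback for $t=2$ also works: each loss is at most $\tfrac12 Mn_in_jw_iw_j$, because $(n_i-1)w_i\le n_iw_i\le M$.

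What your route buys is that it avoids H\"older, the reweighting and $\gamma_t$ altogether, and shows that the merge by itself is profitable, with considerable slack. The price is a separate $t=2$ case and a vector that is not constant on the merged part. The latter is harmless here: the lemma only asks for a feasible vector, and the proof of Theorem~\ref{thm:star} re-invokes Theorem~\ref{thm:cmp-ext}\ref{thmii} to choose a part-constant optimizer on $G'$.

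The paper's version stays within the part-level parametrization and treats all $t\ge2$ uniformly. It also runs in parallel with the proof of Lemma~\ref{lem:multipartite-reduction}, which reuses the same averaging-plus-monotonicity scheme.
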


\begin{proof}
Since at least three of the values $w_1,\ldots,w_q$ are positive, we may
relabel the parts of $G$ such that
\begin{align*}
0 < n_1w_1 \le n_2w_2 \le n_iw_i,
\quad \text{for every } i \ge 3 \text{ with } w_i > 0.
\end{align*}
Let $G'$ be the graph obtained from $G$ by merging the two parts $V_1$ and $V_2$ into a single part. Then
$G'=K_{n_1+n_2,n_3,\ldots,n_q}$.
Define
\begin{align*}
w&\coloneqq\left(\frac{n_1w_1^\alpha+n_2w_2^\alpha}{n_1+n_2}\right)^{1/\alpha},
\end{align*}
and let $\mathbf{x}'$ be the vector assigning the value $w$ to the part
$V_1 \cup V_2$ and the value $w_i$ to $V_i$ for $i \ge 3$. Then
$\mathbf{x}'$ is feasible.

It remains to show that
$P_{K_{1,t},G'}(\mathbf{x}') - P_{K_{1,t},G}(\mathbf{x})\ge 0$. Using (\ref{eq:star-formula}), we obtain

\begin{align*}
\frac{P_{K_{1,t},G'}(\mathbf{x}') - P_{K_{1,t},G}(\mathbf{x})}{(t+1)!}
&= \biggl( \tbinom{n_1+n_2}{t}w^t - \sum_{i\in[2]} \tbinom{n_i}{t}w_i^t \biggr) \sum_{j\in[3,q]} n_j w_j \\[2mm]
& \quad + \bigl( (n_1+n_2)w - n_1w_1 - n_2w_2 \bigr) \sum_{j\in[3,q]} \tbinom{n_j}{t}w_j^t - \tbinom{n_1}{t}w_1^t n_2 w_2 - \tbinom{n_2}{t}w_2^t n_1 w_1.
\end{align*}
By the definition of $w$ when $\alpha=1$, and by H\"older's inequality when
$\alpha>1$, we have
\begin{align}
n_1w_1+n_2w_2
&\leq \left(n_1w_1^\alpha+n_2w_2^\alpha\right)^{1/\alpha}(n_1+n_2)^{(\alpha-1)/\alpha}
=(n_1+n_2)w. \label{eq:star-holder}
\end{align}
This implies 
\begin{align*}
\bigl( (n_1+n_2)w - n_1w_1 - n_2w_2 \bigr) \sum_{j\in[3,q]} \binom{n_j}{t}w_j^t \ge 0.
\end{align*}
Therefore it is sufficient to show that
\begin{align*}
\Delta(P_2)
&\coloneqq \biggl( \tbinom{n_1+n_2}{t}w^t
- \sum_{i\in[2]} \tbinom{n_i}{t}w_i^t \biggr)
\sum_{j\in[3,q]} n_j w_j 
 - \tbinom{n_1}{t}w_1^t n_2 w_2
- \tbinom{n_2}{t}w_2^t n_1 w_1
\ge 0.
\end{align*}
Define
\begin{align*}
\gamma_t(m)&\coloneqq\binom{m}{t}/m^t.
\end{align*}
The function $\gamma_t(m)$ is non-decreasing for positive integers $m$: it is zero for $m<t$, while for $m\ge t$,
\begin{align*}
\gamma_t(m)&=\frac1{t!}\prod_{j\in\{0,\ldots,t-1\}}\left(1-\frac jm\right),
\end{align*}
and each factor is non-decreasing in $m$. It follows that
$\binom{n_1+n_2}{t}w^t = \gamma_t(n_1+n_2)((n_1+n_2)w)^t$, and
$\binom{n_i}{t}w_i^t \le \gamma_t(n_1+n_2)(n_iw_i)^t$ for $i=1,2$. Thus,
\begin{align*}
\Delta(P_2)
&\ge \gamma_t(n_1+n_2) \biggl( \Big( \sum_{j\in[3,q]} n_j w_j \Big) \left( ((n_1+n_2)w)^t - (n_1 w_1)^t - (n_2 w_2)^t \right) 
 - (n_1 w_1)^t n_2 w_2 - (n_2 w_2)^t n_1 w_1 \biggr).
\end{align*}
Set
\begin{align*}
a_1&\coloneqq n_1w_1,\qquad
a_2\coloneqq n_2w_2,\qquad\text{and}\quad
S\coloneqq\sum_{j\in[3,q]}n_jw_j.
\end{align*}
Then $0<a_1\le a_2\le S$. Moreover, (\ref{eq:star-holder}) gives
$(n_1+n_2)w\ge a_1+a_2$, and
\[
(a_1+a_2)^t-a_1^t-a_2^t
=\sum_{k=1}^{t-1}\binom tk a_1^ka_2^{t-k}
\ge t a_1a_2^{t-1}.
\]
Also, $a_1^ta_2+a_2^ta_1\le 2a_1a_2^t$. 
Consequently,
\begin{align*}
\Delta(P_2)
&\ge \gamma_t(n_1+n_2)
\left(S\big((a_1+a_2)^t-a_1^t-a_2^t\big)-a_1^ta_2-a_2^ta_1\right)\\[2mm]
&\ge \gamma_t(n_1+n_2)
\left(tSa_1a_2^{t-1}-a_1^ta_2-a_2^ta_1\right)\\[2mm]
&\ge \gamma_t(n_1+n_2)
\left(ta_1a_2^t-a_1^ta_2-a_2^ta_1\right)
\ge \gamma_t(n_1+n_2)(t-2)a_1a_2^t\ge0,
\end{align*}
where the third inequality uses $S\ge a_2$.
This completes the proof.
\end{proof}
\medskip
Next, we complete the proof of Theorem~\ref{thm:star}.
\medskip
\begin{proof}[Proof of Theorem~\ref{thm:star}]
Write $G_q$ for $G=K_{n_1,\ldots,n_q}$. Let $\mathbf{x}$ be an optimal vector
of $G_q$ whose entries are identical on each part, and write $w_i$ for the
value of $\mathbf{x}$ on $V_i$. If at least three of the numbers
$w_1,\ldots,w_q$ are positive, then Lemma~\ref{lem:star-reduction} gives a
complete $(q-1)$-partite graph $G_{q-1}$ and a feasible vector
$\mathbf{x}'$ such that
\[
P_{K_{1,t},G_{q-1}}(\mathbf{x}')
\ge P_{K_{1,t},G_q}(\mathbf{x})
=\operatorname{spec}_\alpha^{\mathrm{ind}}(K_{1,t},n).
\]
Since
$\lambda_\alpha^{\mathrm{ind}}(K_{1,t},G_{q-1})\le
\operatorname{spec}_\alpha^{\mathrm{ind}}(K_{1,t},n)$ and
$\lambda_\alpha^{\mathrm{ind}}(K_{1,t},G_{q-1})\ge
P_{K_{1,t},G_{q-1}}(\mathbf{x}')$, equality holds throughout. In particular,
$\lambda_\alpha^{\mathrm{ind}}(K_{1,t},G_{q-1})
=\operatorname{spec}_\alpha^{\mathrm{ind}}(K_{1,t},n)$.
By Theorem~\ref{thm:cmp-ext}\ref{thmii}, we may again choose an optimal
vector for $G_{q-1}$ whose entries are identical on each part.

Repeating this process, we obtain an extremal complete multipartite graph
$G_{q'}$ for some $1\le q'\le q$, together with an optimal vector that is
identical on each part and has positive values on at most two parts.
Since $n\ge t+1$, we have
$\operatorname{spec}_\alpha^{\mathrm{ind}}(K_{1,t},n)>0$. Thus the optimal
vector must have at least two positive parts. Consequently, the optimal vector
on $G_{q'}$ has exactly two positive parts; denote them by $V_1$ and $V_2$.

Now merge all vertices in $V(G_{q'})\setminus (V_1\cup V_2)$ into $V_1$. The
current optimal vector assigns value $0$ to all vertices outside
$V_1\cup V_2$, so this operation does not change its value under
$P_{K_{1,t},\cdot}$. Thus the new graph has spectral value at least
$\operatorname{spec}_\alpha^{\mathrm{ind}}(K_{1,t},n)$, and the reverse
inequality follows from the definition of
$\operatorname{spec}_\alpha^{\mathrm{ind}}(K_{1,t},n)$. We finally obtain an
$n$-vertex complete bipartite graph $G^\ast$ such that
\begin{align*}
\lambda_\alpha^{\mathrm{ind}}(K_{1,t},G^\ast)
&=\operatorname{spec}_\alpha^{\mathrm{ind}}(K_{1,t},n).
\end{align*}
Since every graph appearing in the maximum on the right-hand side of
Theorem~\ref{thm:star} is an $n$-vertex graph, that maximum is at most
$\operatorname{spec}_\alpha^{\mathrm{ind}}(K_{1,t},n)$, while the graph
$G^\ast$ shows that it is at least
$\operatorname{spec}_\alpha^{\mathrm{ind}}(K_{1,t},n)$.
The result follows.
\end{proof}

\section{Proof of Theorem~\ref{thm:multipartite-balanced}}\label{sec:multipartite-proof}
We give a proof of Theorem~\ref{thm:multipartite-balanced}. Let $a$ and $r$
be integers such that $r\le 2^a-1$.
The statement is trivial for $n<ar$. Thus, we assume that $n\ge ar$. 
For convenience, we write $F$ for the complete $r$-partite graph $K_{a,\dots,a}$ with each part of size $a$.
By Theorem~\ref{thm:cmp-ext}, there exists an $n$-vertex complete $q$-partite graph $G = K_{n_1,\dots,n_q}$ with parts $V_1, V_2, \dots, V_q$, where
\begin{align*}
n_i&\coloneqq |V_i| \quad \text{for } i\in[q],
\end{align*}
such that $\lambda_\alpha^{\mathrm{ind}}(F,G)=\operatorname{spec}_\alpha^{\operatorname{ind}}(F,n)$.
Moreover, there exists an optimal nonnegative vector
$\mathbf{x}\in \operatorname{OPT}_{\alpha,F}(G)$ that is constant on each
$V_i$, say with value $w_i$. Thus
\begin{align}
P_{F,G}(\mathbf{x})
&=(ar)!\sum_{\substack{I\in\binom{[q]}r}}
\prod_{i\in I}\binom{n_i}{a}w_i^a,\notag
\end{align}
where $\sum_{i\in[q]} n_iw_i^\alpha=1$. For each $i\in[q]$, let $A_i \coloneqq\binom{n_i}{a}w_i^a$. 
Then we can rewrite $P_{F,G}(\mathbf{x})$ as follows.
\begin{align}
P_{F,G}(\mathbf{x})
&=(ar)!\sum_{\substack{I\in\binom{[q]}r}}\prod_{i\in I}A_i.
\label{eq:multipartite-formula}
\end{align}
\vspace{-2mm}
\begin{lemma}\label{lem:multipartite-size}
Under the setup above, if $w_i>0$ for some $i\in[q]$, then $n_i\ge a$ and
$A_i>0$.
\end{lemma}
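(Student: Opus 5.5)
Note first that $A_i>0$ is equivalent to $n_i\ge a$ when $w_i>0$, since $\binom{n_i}{a}>0$ exactly when $n_i\ge a$. So it suffices to show that no part with positive weight has fewer than $a$ vertices. I would argue by contradiction. Suppose $w_i>0$ but $n_i<a$, so that $A_i=0$. In formula~(\ref{eq:multipartite-formula}), every $r$-set $I$ containing $i$ then contributes zero. This is the combinatorial fact that no induced copy of $F$ can use a vertex of $V_i$: such a copy would need $a$ vertices from $V_i$, because vertices of one part of $F$ must lie in a single part of $G$ and each part of $F$ has size exactly $a$. Hence $P_{F,G}(\mathbf{x})$ does not depend on $w_i$ at all.

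The key step is to redistribute the weight. Set $w_i'=0$, which frees the mass $n_iw_i^\alpha>0$, and spread it over the other parts. We need some $j\ne i$ with $w_j>0$ and $A_j>0$, appearing in a nonzero term. Such a term exists because $\operatorname{spec}_\alpha^{\mathrm{ind}}(F,n)>0$ for $n\ge ar$, as witnessed by the Tur\'an-type graph $K_{a,\ldots,a,n-(r-1)a}$ with the uniform vector. So $P_{F,G}(\mathbf{x})>0$, and some $I$ with $\prod_{k\in I}A_k>0$ exists; it avoids $i$. Let $\mathbf{x}'$ rescale all remaining coordinates by the factor $c=(1-n_iw_i^\alpha)^{-1/\alpha}>1$ and put $0$ on $V_i$. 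Then $\mathbf{x}'$ is feasible.

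Since $P_{F,G}$ is homogeneous of degree $ar$ and independent of the $V_i$ coordinates, we get $P_{F,G}(\mathbf{x}')=c^{ar}P_{F,G}(\mathbf{x})>P_{F,G}(\mathbf{x})=\lambda_\alpha^{\mathrm{ind}}(F,G)$. This contradicts optimality.

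The only slightly delicate point is positivity of the optimum, i.e.\ that $P_{F,G}(\mathbf{x})>0$, which uses $n\ge ar$ and extremality of $G$. Everything else is immediate.
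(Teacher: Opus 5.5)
Your proof is correct. It shares the paper's skeleton: argue by contradiction, get positivity of $\operatorname{spec}_\alpha^{\mathrm{ind}}(F,n)$ from $K_{a,\ldots,a,n-a(r-1)}$, then put zero weight on $V_i$ and reuse its mass. The redistribution step is where you differ.

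The paper moves all of the mass $n_{i_0}w_{i_0}^\alpha$ onto a single part $V_{j_0}$ with $A_{j_0}>0$. It then computes the change in the elementary symmetric sum explicitly. That change is $(ar)!(A'_{j_0}-A_{j_0})$ times the $(r-1)$-st elementary symmetric sum of the remaining $A_k$, and it is positive because at least $r$ of the $A_k$ are positive.

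You instead rescale all other coordinates by $c=(1-n_iw_i^\alpha)^{-1/\alpha}$. You then use only that $P_{F,G}$ is homogeneous of degree $ar$ and independent of the coordinates on $V_i$, which gives $P_{F,G}(\mathbf{x}')=c^{ar}P_{F,G}(\mathbf{x})$.

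Your version is shorter and more general. It shows, for any $F$ and any $G$ with $\lambda_\alpha^{\mathrm{ind}}(F,G)>0$, that an optimal vector vanishes on every vertex lying in no edge of $H_F(G)$. The part structure is used only to identify such vertices. The paper's version changes just two part weights and stays inside the elementary symmetric formula, at the cost of a bit more bookkeeping.

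One detail you leave implicit is that $c$ is finite, i.e.\ $n_iw_i^\alpha<1$. This follows from the positive term you exhibit: it avoids $i$, so it forces some $w_j>0$ with $j\ne i$.
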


\begin{proof}
It suffices to show that $n_i\ge a$. Suppose to the contrary that there exists an index $i_0\in[q]$ such that $w_{i_0}>0$ but $n_{i_0}<a$. Since $n\ge ar$, the complete $r$-partite graph $K_{a,\ldots,a,n-a(r-1)}$ contains an induced copy of $F$, so $\operatorname{spec}_\alpha^{\mathrm{ind}}(F,n)>0$. Since $\mathbf{x}$ is optimal for the extremal graph $G$, equation~\eqref{eq:multipartite-formula} gives $P_{F,G}(\mathbf{x})=\operatorname{spec}_\alpha^{\mathrm{ind}}(F,n)>0$. Hence at least one term in the elementary symmetric sum in~\eqref{eq:multipartite-formula} is positive, and therefore at least $r$ of the numbers $A_1,\ldots,A_q$ are positive.

We construct a new vector $\mathbf{x}'$ that takes the value $w'_k$ on $V_k$
for each $k\in[q]$. Choose an index $j_0 \neq i_0$ such that $A_{j_0}>0$ and
$n_{j_0}\ge a$. Set
\begin{align*}
w'_{i_0}\coloneqq 0,
\quad\text{and}\quad 
w'_k\coloneqq w_k \quad\text{for}\quad k \notin \{i_0,j_0\},
\end{align*}
and define $w'_{j_0}$ by
\begin{align*}
w'_{j_0}&\coloneqq\left(\frac{n_{j_0}w_{j_0}^\alpha+n_{i_0}w_{i_0}^\alpha}{n_{j_0}}\right)^{1/\alpha}.
\end{align*}
Then $\mathbf{x}'$ is feasible. Let
\begin{align*}
A'_k&\coloneqq\binom{n_k}{a}{w'_k}^a \quad \text{for } k\in[q].
\end{align*}
Since $w_{i_0}>0$, we have $w'_{j_0}>w_{j_0}$. Together with $A_{j_0}>0$ and $n_{j_0}\ge a$, this gives $A'_{j_0}>A_{j_0}$. For all $k\notin\{i_0,j_0\}$, we have $A'_k=A_k$. Note also that $A'_{i_0}=A_{i_0}=0$. Consequently, by (\ref{eq:multipartite-formula}), we have
\begin{align*}
P_{F,G}(\mathbf{x}')-P_{F,G}(\mathbf{x})
&=(ar)!(A'_{j_0}-A_{j_0})\sum_{\substack{J\in\binom{[q]\setminus \{j_0\}}{r-1}}}\prod_{k\in J}A_k.
\end{align*}
Since at least $r$ of the numbers $A_1,\ldots,A_q$ are positive and one of them is $A_{j_0}$, the last sum is positive. Therefore,
$P_{F,G}(\mathbf{x}')>P_{F,G}(\mathbf{x})$, which contradicts the optimality of $\mathbf{x}$. Hence, we must have $n_{i_0}\ge a$ whenever $w_{i_0}>0$.
\end{proof}
\medskip
\begin{lemma}\label{lem:multipartite-reduction}
Under the setup above, for every integer $q\ge r+1$, if at least $r+1$ of the numbers $w_1,\ldots,w_q$ are positive, then there exists a complete $(q-1)$-partite graph $G'$ and a feasible vector $\mathbf{x}'$ such that
\begin{align*}
P_{F,G'}(\mathbf{x}')\ge P_{F,G}(\mathbf{x}).
\end{align*}
\end{lemma}

\begin{proof}
Since at least $r+1$ of the values among $w_1,\ldots,w_q$ are positive, we may relabel the parts of $G$ such that
\begin{align*}
0<A_1\le A_2\le A_i
        \qquad
        \text{for every } i\ge 3 \text{ with } w_i>0.
\end{align*}
Let $G'$ be the graph obtained from $G$ by merging the two parts $V_1$ and $V_2$ into a single part. Then
$G'=K_{n_1+n_2,n_3,\ldots,n_q}$. Define
\begin{align*}
w&\coloneqq\left(\frac{n_1w_1^\alpha+n_2w_2^\alpha}{n_1+n_2}\right)^{1/\alpha}.
\end{align*}
Then
\begin{align}
n_1w_1+n_2w_2&\le (n_1+n_2)w.\label{eq:multipartite-holder}
\end{align}
Let $\mathbf{x}'$ be the vector assigning the value $w$ to the part
$V_1\cup V_2$ and the value $w_i$ to $V_i$ for $i\ge 3$. Then $\mathbf{x}'$
is feasible.
We show that $P_{F,G'}(\mathbf{x}')-P_{F,G}(\mathbf{x})\ge 0$. Writing
\begin{align*}
A'&\coloneqq\binom{n_1+n_2}{a}w^a
\end{align*}
and using (\ref{eq:multipartite-formula}), we obtain
\begin{align*}
\frac{P_{F,G'}(\mathbf{x}')-P_{F,G}(\mathbf{x})}{(ar)!}
&= (A'-A_1-A_2)
\sum_{\substack{J\subseteq[3,q]\\ |J|=r-1}}
\prod_{j\in J}A_j
 -A_1A_2
\sum_{\substack{J\subseteq[3,q]\\ |J|=r-2}}
\prod_{j\in J}A_j.
\end{align*}
For $0\le \ell\le r-1$, set
\begin{align*}
S_{\ell}&\coloneqq \sum_{\substack{J\subseteq[3,q]\\ |J|=\ell}} \prod_{j\in J}A_j,
\end{align*}
with the convention $S_0=1$.
Define
\begin{align*}
B(m)&\coloneqq\binom{m}{a}^{1/a}/m.
\end{align*}
For $m\ge a$, this function is increasing because
\begin{align*}
B(m)^a&=\frac1{a!}\prod_{j\in\{0,\ldots,a-1\}}\left(1-\frac jm\right),
\end{align*}
and the product is increasing in $m$. By Lemma~\ref{lem:multipartite-size}, we have $n_1,n_2\ge a$. Hence
\begin{align}
{A'}^{1/a}
&=B(n_1+n_2)\cdot (n_1+n_2)w \notag \\[2mm]
&\ge B(n_1+n_2)(n_1w_1+n_2w_2) 
\ge B(n_1)n_1w_1+B(n_2)n_2w_2 =A_1^{1/a}+A_2^{1/a},\notag
\end{align}
where the first inequality holds by (\ref{eq:multipartite-holder}), and the
second inequality follows from the fact that $B(m)$ is increasing. Since
$S_{r-1}\ge0$, the expression for
$P_{F,G'}(\mathbf{x}')-P_{F,G}(\mathbf{x})$ is nondecreasing in $A'$.
It suffices to show that $\Delta(P_r)\ge0$, where
\begin{align*}
\Delta(P_r)&\coloneqq \big((A_1^{1/a}+A_2^{1/a})^a-A_1-A_2\big)S_{r-1}-A_1A_2S_{r-2}.
\end{align*}
Since there are at least $r-1$ positive terms
among $A_3, \dots, A_q$, all of which are at least $A_2$, we have
\begin{align*}
(r-1)S_{r-1}
&=\sum_{\substack{J\subseteq[3,q]\\ |J|=r-2}}
\left(\prod_{j\in J}A_j\right)
\sum_{i\in[3,q]\setminus J}A_i
\ge A_2S_{r-2}.
\end{align*}
Here the inner sum is at least $A_2$ for every $J$ with $|J|=r-2$, because
such a set $J$ cannot contain all positive terms among $A_3,\ldots,A_q$.
Thus
\begin{align*}
S_{r-1}&\ge \frac{A_2}{r-1}S_{r-2}.
\end{align*}
It follows that
\begin{align*}
\Delta(P_r)
&\ge \left((A_1^{1/a}+A_2^{1/a})^a-A_1-A_2\right)
       \frac{A_2}{r-1}S_{r-2}-A_1A_2S_{r-2} \\[2mm]
&=\frac{A_2S_{r-2}}{r-1}
\left((A_1^{1/a}+A_2^{1/a})^a-A_2-rA_1\right).
\end{align*}
Let
\begin{align*}
\tau&\coloneqq(A_1/A_2)^{1/a}.
\end{align*}
Then $0<\tau\le 1$. Note that $r\le 2^a-1$, so
\begin{align*}
\left(A_1^{1/a}+A_2^{1/a}\right)^a-A_2-rA_1
&=A_2\big((1+\tau)^a-1-r\tau^a\big) \\[2mm]
&=A_2\Big( \sum_{k\in[a]}\tbinom{a}{k}\tau^k-r\tau^a \Big) 
\ge A_2\big((2^a-1)\tau^a-r\tau^a\big)\ge 0.
\end{align*}
Therefore, $\Delta(P_r)\ge 0$, and so $P_{F,G'}(\mathbf{x}')\ge P_{F,G}(\mathbf{x})$. This completes the proof.
\end{proof}
\medskip

Next, we complete the proof of Theorem~\ref{thm:multipartite-balanced}.
\medskip
\begin{proof}[Proof of Theorem~\ref{thm:multipartite-balanced}]
Write $G_q$ for $G=K_{n_1,\ldots,n_q}$. Let $\mathbf{x}$ be an optimal vector
of $G_q$ whose entries are identical on each part, and write $w_i$ for the
value of $\mathbf{x}$ on $V_i$. If at least $r+1$ of the numbers
$w_1,\ldots,w_q$ are positive, then Lemma~\ref{lem:multipartite-reduction}
gives a complete $(q-1)$-partite graph $G_{q-1}$ and a feasible vector
$\mathbf{x}'$ such that
\[
P_{F,G_{q-1}}(\mathbf{x}')
\ge P_{F,G_q}(\mathbf{x})
=\operatorname{spec}_\alpha^{\mathrm{ind}}(F,n).
\]
Since
$\lambda_\alpha^{\mathrm{ind}}(F,G_{q-1})\le
\operatorname{spec}_\alpha^{\mathrm{ind}}(F,n)$ and
$\lambda_\alpha^{\mathrm{ind}}(F,G_{q-1})\ge
P_{F,G_{q-1}}(\mathbf{x}')$, equality holds throughout. In particular,
$\lambda_\alpha^{\mathrm{ind}}(F,G_{q-1})
=\operatorname{spec}_\alpha^{\mathrm{ind}}(F,n)$.
By Theorem~\ref{thm:cmp-ext}\ref{thmii}, we may again choose an optimal
vector for $G_{q-1}$ whose entries are identical on each part.

Repeating this process, we obtain an extremal complete multipartite graph
$G_{q'}$ for some $1\le q'\le q$, together with an optimal vector that is
identical on each part and has positive values on at most $r$ parts.
Since $n\ge ar$, we have $\operatorname{spec}_\alpha^{\mathrm{ind}}(F,n)>0$.
Thus the optimal vector must have at least $r$ positive parts. Consequently,
the optimal vector on $G_{q'}$ has exactly $r$ positive parts; denote them by
$V_1,\ldots,V_r$.

Now merge all vertices in $V(G_{q'})\setminus (V_1\cup\cdots\cup V_r)$ into
$V_1$. The current optimal vector is zero on these vertices, so the value of
$P_{F,\cdot}$ at this vector is unchanged. Thus the new graph has spectral
value at least $\operatorname{spec}_\alpha^{\mathrm{ind}}(F,n)$, and the
reverse inequality follows from the definition of
$\operatorname{spec}_\alpha^{\mathrm{ind}}(F,n)$. We finally obtain an
$n$-vertex complete $r$-partite graph $G^\ast$ such that
$\lambda_\alpha^{\mathrm{ind}}(F,G^\ast)
=\operatorname{spec}_\alpha^{\mathrm{ind}}(F,n)$.
Since every graph appearing in the maximum on the right-hand side of
Theorem~\ref{thm:multipartite-balanced} is an $n$-vertex graph, that maximum
is at most $\operatorname{spec}_\alpha^{\mathrm{ind}}(F,n)$, while the graph
$G^\ast$ shows that it is at least
$\operatorname{spec}_\alpha^{\mathrm{ind}}(F,n)$.
The result follows.
\end{proof}

\section*{Acknowledgments}
Research of Liying Kang was supported by the National Natural Science Foundation of China (grant numbers 12331012, 12571375).
Research of Xizhi Liu was supported by the Excellent Young Talents Program (Overseas) of the National Natural Science Foundation of China.

\section*{Declaration on the use of AI}
The authors used generative AI tools to assist in discussing proof strategies, checking proofs, and improving exposition. 

\bibliographystyle{abbrv}
\bibliography{references}
\end{document}